\begin{document}
\definecolor{qqqqff}{rgb}{0,0,0}

\newtheorem{theorem}{Theorem}[section]
\newtheorem{lemma}[theorem]{Lemma}
\newtheorem{coro}[theorem]{Corollary}
\newtheorem{definition}[theorem]{Definition}
\newtheorem{example}[theorem]{Example}
\newtheorem{xca}[theorem]{Exercise}
\newtheorem{claim}{Claim}
\theoremstyle{remark}
\newtheorem{remark}[theorem]{Remark}

\numberwithin{equation}{section}
\newtheorem{cj}[theorem]{Conjecture}

\newtheorem{defi}[theorem]{Definition}
\newtheorem{prop}[theorem]{Proposition}

\def\binom#1#2{{#1}\choose{#2}}

\def\slfrac#1#2{\hbox{\kern.1em %
 \raise.5ex\hbox{\the\scriptfont0 #1}\kern-.11em %
 /\kern-.15em\lower.25ex\hbox{\the\scriptfont0 #2}}}

\font\phvr=phvr at 10pt
\newcommand\he[1]{\mbox{\phvr  #1}}

\newcommand{\fot}{\frac{1}{2}}
\newcommand{\eqn}[1]{(\ref{#1})}
\newcommand{\hsp}{\hspace*{\parindent}}
\newcommand{\vsp}{\vspace{.1in}}

\newcommand{\bA}{\bar{A}}

\newcommand{\Latilde}{\tilde{\La}}
\newcommand{\determ}{\rm det}
\newcommand{\La}{\Lambda}
\newcommand{\la}{\lambda}
\newcommand{\af}{\alpha}
\newcommand{\ep}{\epsilon}
\newcommand{\eps}{\epsilon}
\newcommand{\om}{\omega}
\newcommand{\Om}{\Omega}

\newcommand{\agh}{{a}}

\newcommand{\AAA}{{\mathbb A}}
\newcommand{\ZZ}{{\mathbb Z}}
\newcommand{\RR}{{\mathbb R}}
\newcommand{\GG}{{\mathbb G}}
\newcommand{\HH}{{\mathbb H}}
\newcommand{\LL}{{\mathbb L}}
\newcommand{\NN}{{\mathbb N}}
\newcommand{\FF}{{\mathbb F}}
\newcommand{\PP}{{\mathbb P}}
\newcommand{\QQ}{{\mathbb Q}}
\newcommand{\TT}{{\mathbb T}}
\newcommand{\CC}{{\mathbb C}}

\newcommand{\bb}{{\bf b}}
\newcommand{\bc}{{\bf c}}
\newcommand{\bd}{{\bf d}}
\newcommand{\be}{{\bf e}}
\newcommand{\bk}{{\bf k}}
\newcommand{\bD}{{\bf D}}
\newcommand{\bF}{{\bf F}}
\newcommand{\bG}{{\bf G}}
\newcommand{\bU}{{\bf U}}
\newcommand{\bm}{{\bf m}}
\newcommand{\bz}{{\bf z}}
\newcommand{\bv}{{\bf v}}
\newcommand{\bw}{{\bf w}}
\newcommand{\bx}{{\bf x}}
\newcommand{\bh}{{\bf h}}

\newcommand{\bbS}{\Sigma}
\newcommand{\sB}{{\mathcal B}}
\newcommand{\sC}{{\mathcal C}}
\newcommand{\sD}{{\mathcal D}}
\newcommand{\sE}{{\mathcal E}}
\newcommand{\sF}{{\mathcal F}}
\newcommand{\sG}{{\mathcal G}}
\newcommand{\sO}{{\mathcal O}}
\newcommand{\EG}{{\delta}}

\newcommand{\sH}{{\mathcal H}}
\newcommand{\sL}{{\mathcal L}}
\newcommand{\sM}{{\mathcal M}}
\newcommand{\sN}{{\mathcal N}}
\newcommand{\sP}{{\mathcal P}}
\newcommand{\sR}{{\mathcal R}}
\newcommand{\sS}{{\mathcal S}}
\newcommand{\sT}{{\mathcal T}}
\newcommand{\sW}{{\mathcal W}}
\newcommand{\sX}{{\mathcal X}}
\newcommand{\sY}{{\mathcal Y}}
\newcommand{\sZ}{{\mathcal Z}}
\newcommand{\sLP}{{\mathcal L}{\mathcal P}}
\newcommand{\sEP}{{\mathcal E}{\mathcal P}}
\newcommand{\sGP}{{\mathcal G}{\mathcal P}}
\newcommand{\sHP}{{\mathcal H}{\mathcal P}}

\newcommand{\blambda}{{\bf \lambda}}
\newcommand{\bE}{{\bf E}}
\newcommand{\Ein}{{\rm Ein}}
\newcommand{\Eone}{{\rm E}_1}
\newcommand{\Li}{\,{\rm Li}}
\newcommand{\lf}{\lfloor}
\newcommand{\rf}{\rfloor}
\newcommand{\lc}{\lceil}
\newcommand{\rc}{\rceil}
\newcommand{\meas}{{ \rm meas}}

\renewcommand{\theequation}{\arabic{section}.\arabic{equation}}

\newcommand{\aext}{\measuredangle_{\operatorname{ext}}}

\title{$3x+1$ inverse orbit generating functions almost always have natural boundaries}
\author{Jason P. Bell}
\address{Dept. of Mathematics,
University of Waterloo,
Waterloo, ON }
\curraddr{}
\email{jpbell@uwaterloo.ca}

\author{Jeffrey C. Lagarias}
\address{Dept. of Mathematics,
University of Michigan,
Ann Arbor, MI 48109-1043, USA}
\curraddr{}
\email{lagarias@umich.edu}

\subjclass[2010]{Primary 30B40; Secondary 11B83, 11K31, 26A18, 30B10, 37A45}
\thanks{The research  of the second author was partially supported by NSF Grants DMS-1101373 and DMS-1401224.}

\date{August 22, 2014, v4.3}
\maketitle

\begin{abstract}
The $3x+k$ function $T_{k}(n)$ sends $n$ to  $(3n+k)/2$ resp. $n/2,$
according as $n$ is odd, resp.  even, where $k \equiv \pm 1~(\bmod \, 6)$. 
The map $T_k(\cdot)$ sends integers to integers, and for $m \ge 1$
let $n \rightarrow m$
mean that $m$ is in the forward orbit of $n$ under iteration of $T_k(\cdot).$
We consider the generating functions $f_{k,m}(z) = \sum_{n>0, n \rightarrow m}  z^{n},$
which are holomorphic in the unit disk. 
We give sufficient conditions on $(k,m)$ for the functions  $f_{k, m}(z)$ 
have the unit circle $\{|z|=1\}$ as a natural boundary to analytic continuation.
For the  $3x+1$ function these conditions hold for all  $m \ge 1$ to show
that $f_{1,m}(z)$ has the unit circle as a natural boundary
except possibly for $m= 1, 2, 4$ and $8$. The $3x+1$ Conjecture  is equivalent to the assertion 
that $f_{1, m}(z)$ is  a rational function of $z$ for  the remaining values $m=1,2, 4, 8$.
\end{abstract}

%
%
%
%

\section{Introduction}

The $3x+1$ function is given by 
\begin{equation}\label{101}
T(n) = T_1(n) := \begin{cases}
\frac{3n+1}{2} & \text{if $n$ is odd},\\
\frac{n}{2} & \text{if $n$ is even}.
\end{cases}
\end{equation}
The  {\em  $3x+1$ problem} (or {\em Collatz problem}) concerns
the behavior of this map under  iteration,
 restricted to the domain  of positive integers $\NN^{+}$. 
This domain  is invariant under iteration, and it contains the 
periodic orbit  $\{ 1, 2\}$ of $T$, which is 
 the only  periodic orbit known on $\NN^{+}$ at present.  The   {\em $3x+1$ Conjecture}
(or  {\em Collatz Conjecture}) asserts that  every positive
integer under iteration enters this periodic orbit.
The $3x+1$ conjecture appears to be intractable at present, see for example 
\cite{Lag85, Wir98} and for recent viewpoints \cite{Lag10} and \cite{Con13}.

The $3x-1$ function is given by 
\begin{equation}\label{101}
T_{-1} (n) := \begin{cases}
\frac{3n-1}{2} & \text{if $n$ is odd};\\
\frac{n}{2} & \text{if $n$ is even}.
\end{cases}
\end{equation}
It satisfies $T_{-1}(n) = - T_1 (-n).$ There is an analogous $3x-1$ problem concerning 
its behavior under iteration on the positive integers $\NN^{+}$, which  has recently been
studied by Berg and Opfer \cite{BO13}. This function  has three known   periodic
orbits on $\NN^{+}$, which are 
$$
\{ 1\},  \quad \{ 5, 7, 10\} \quad  \mbox{and} \quad
\{ 17, 25, 37, 55, 82, 41, 61, 91, 136, 68, 34\}.
$$
The {\em $3x -1$ Conjecture} asserts that every integer 
$m \ge 1$ under iteration by $T_{-1}$ eventually enters one of these three periodic orbits. 
This conjecture also appears intractable at present.

More generally one may consider iteration of 
the {\em  $3x+k$ function}, where $k \equiv \pm1 ~(\bmod \, 6)$, given by 
\begin{equation}\label{501}
T_k(n) := \begin{cases}
\frac{3n+k}{2} & \text{if $n$ is odd};\\
\frac{n}{2} & \text{if $n$ is even}.
\end{cases}
\end{equation}
The $3x+k$  functions were studied  in \cite{Lag90}, 
in connection with 
rational cycles for the $3x+1$ function.
Those periodic orbits of the $3x \pm k$ function 
 restricted to  the domain of all
integers $n$ having $(n, k)=1$ are known to 
correspond to those rational cycles for the $3x+1$ problem whose
members  each have  denominator $k$,  when written in lowest terms.

We  let $T^{\circ j}(m)$ denote the $j$-th iterate of a map $T:\ZZ \to \ZZ $, and denote the
{\em forward orbit } of $m$ by 
$$
\sO_{k}^{+}(m)  := \{ n: \, n=T_k^{\circ j}(m) \, \mbox{for some} \,  j \ge 0\}.
$$
In terms of forward orbits the  $3x+1$ Conjecture asserts that $1 \in \sO^{+}(m)$ for each integer $m \ge 1$.
Additionally we  define the {\em backward orbit} (or {\em inverse orbit}) of $m$ by 
$$
\sO_{k}^{-}(m)  := \{ n : \, T_k^{\circ j}(n):=m, \, \mbox{for some} \, \, j \ge 0\}.
$$
The set $\sO_K^{-}(m)$  comprises the forward orbit of $m$ under the (multivalued) inverse map 
\begin{equation}\label{101b}
T^{\circ-1}(n) = T_1^{\circ -1}(n) := 
\begin{cases}
\{ 2n\} & \text{if} \, n\equiv 0, 1 \, (\bmod \, 3);\\
\{ 2n, \frac{2n-1}{3}\} & \text{if} \,  n \equiv \, 2 \, (\bmod\, 3).
\end{cases}
\end{equation}
The  $3x+1$ Conjecture formulated in terms of backwards orbits asserts that $\sO_{1}^{-}(1) = \NN^{+}$,
where $\NN^{+}$ denotes the set of positive integers.

The main objects of study of this paper are  the 
{\em backward orbit generating functions}
\begin{equation}\label{eq14}
f_{k, m}(z) := \sum_{n \in \sO_{k}^{-}(m)\cap \NN^{+} } z^n,
\end{equation}
with  $k \equiv \pm \, 1 \, (\bmod \, 6)$
and $m \in \ZZ$.
The  functions $f_{k, m}(z)$ are  analytic functions of $z$ in the open unit disk $\{ z\in \CC: |z| <1\},$
and we consider the problem of when these generating functions are analytically continuable to
 larger domains in the complex plane.

Our main result, Theorem \ref{th10a} below,
formulates  conditions  characterizing for the $3x+k$ problem when the  
 generating function of a finite union of backwards orbits 
is a rational function of $z$. 
There is a known dichotomy for analytic continuation of a class of functions including the type above: they
either have the unit circle as a natural boundary to analytic continuation or else are  rational functions   (the P\'{o}lya-Carlson theorem).
Using this dichotomy we  deduce that 
 for each $k \in \ZZ$ with $(k, 6)=1$ and for almost all $m \ge 1$ the functions $f_{k, m}(z)$
have the unit circle $\{ |z|=1\}$ as  a natural boundary to analytic continuation. 

The functions $f_{k,m}(z)$  encode data  on the orbit intersected with  the positive
integers. However  with the proper choice  two such functions one can cover the orbit
on the negative integers as well.   To see this, we first note  that the  $3x+k$ function and the $3x-k$ function are conjugate under the
involution  $J: \ZZ \to \ZZ$ with  $J(x)= -x$,   i.e. 
$$T_{k} \circ J = J \circ T_{-k},$$
an operation which exchanges positive and negative
integers.
Consequently the behavior  of the function $T_k$ restricted to the negative integers
exactly matches that  of $T_{-k}$ restricted to  the positive integers, with the latter encoded 
by the generating function above. Thus the two generating functions $f_{k, m}(z)$ and $f_{-k, -m}(z)$
between them give  full information\footnote{For  functions $T_{k}$ that we consider no orbit contains $m=0$ except for the single point orbit  $\{0\}$.}
 for the inverse orbit $\sO_k^{-1}(m)$ on $\mathbb{Z}$.

For  the special case $k =\pm 1$ corresponding to the $3x+1$ function
and the $3x-1$ function respectively, the  positive integers $\NN^{+}$ and 
negative integers $\NN^{-}= - \NN^{+}$ are each bi-invariant sets for $T_k$; i.e., they are closed under forward and backward iteration.
The conjugacy function $J$ above shows that the
  $3x-1$ function on $\NN^{+}$ has iterates matching those of the $3x+1$ function
on the negative integers. 
In this special case we need only consider  $m \ge 1$, using $f_{1, m}(z)$ and $f_{-1, m}(z)$.
The  generating functions $f_{1, m} (z) = f_{-1, m}(z) \equiv 0$ for all $m \le -1$.


\subsection{Main results}

The  results of this paper concern properties of generating functions for the
 set union of a finite number of backward orbits of the $3x+k$ map.
 
 We first observe that
the backward orbits of a general  function $T: X \to X$ have  a trichotomy of possible behaviors: 
Two distinct such orbits $\sO_{T}^{-}(m_1)$ and $\sO_{T}^{-}(m_2)$  either  are disjoint or have one 
of them  properly contained in the other. These three outcomes correspond to one of: 
\begin{enumerate}
\item[(i)]  $m_1 \in \sO_{T}^{-}(m_2)$;
\item[(ii)] $m_2 \in \sO_{T}^{-}(m_1)$;  
\item[(iii)] neither (i) nor (ii) holds.
\end{enumerate} 
It follows from this trichotomy that for any  map $T$ on a countable set $X$, 
the set union $\sS$ of any  finite collection of backward orbits
can always be partitioned into a set of  {\em disjoint}  backward orbits of single elements, whose  set union equals that of the whole
collection.

Our main result characterizes when a finite union of backwards orbits has  generating function
that is a rational function. 


\begin{theorem} \label{th10a}
Consider the $3x+k$ map $T_k$ for an integer $k \equiv  \pm 1\, (\bmod \,6)$.
The following two conditions on a set union $\sS = \bigcup_{i=1}^{\ell} \sO_{k}^{-} (m_i)$
 of  a finite set of backward orbits 
$\{ \sO_{k}^{-} (m_i); 1 \le i \le \ell\}$ of $T_k$ are equivalent.

\begin{enumerate}
\item[(1)] The generating function  of  $\sS$
 restricted to $\NN^{+}$, which is
$$
g(z) := \sum_{n \in \sS \cap \NN^{+}} z^n,
$$
 is a  rational function of $z$.
\item[(2)] There is 
a set $X$ of  residue classes $(\bmod\, |k|)$ and a positive integer $k_0$ such  that the rational function
$$
h(z) = \sum_{{n>0}\atop{ n (\bmod \, |k|) \,\in X}} z^n = \sum_{{a \in X}\atop{1 \le a \le |k|}} \frac{z^a}{1- z^{|k|}},
$$
has power series coefficients agreeing with $g(z)$ for all $n \ge k_0$, so that $g(z)-h(z)$ is a polynomial of degree at most $k_0-1$.
That is,  the set of all $n \ge k_0$  belonging to $\sS$ contains exactly  
those $n \ge k_0$ that belong to the union of the arithmetic progressions $(\bmod\, |k|)$\\ in $X$.
\end{enumerate} 
If the  equivalent conditions (1), (2) hold, 
then the set $X$ 
 of residue classes in (2) is  closed under the action of the maps $r \mapsto 2r$ and $r \mapsto 3 r$
acting on residue classes $(\bmod \, |k|)$. 
 \end{theorem}

We prove Theorem \ref{th10a}  in Section \ref{sec3}. 
The proof uses the Skolem-Mahler-Lech 
Theorem, whose statement we recall in  Section \ref{sec2},
together with the trichotomy above.
The logical status of this result is interesting:  
At present we do  not know of  a single  case  of a value $k$ and a set  union $\sS$ where
either of conditions (1) or (2) hold unconditionally.
However if  the $3x+1$ conjecture
is true then  for $k=1$
there exist infinitely many  examples of finite unions of backward orbits $\sS$
where conditions (1)--(2) hold. 
See Section \ref{sec6} for discussion of the case of general $k \equiv \pm 1~(\bmod \, 6)$.

We deduce  from Theorem \ref{th10a} several 
consequences  about backward orbit generating functions having natural boundaries,
given in Theorems \ref{th11} - \ref{th14} below.

First we  consider natural boundaries for the  $3x+1$ function. 

\begin{theorem}\label{th11}
Consider  the $3x+1$ map $T_1$ on the positive integers $\NN^{+}$. For the  inverse orbit
generating functions $f_{1,m}(z) = \sum_{ n \in \sO_1^{-}(m)} z^n$ with starting value $m \ge 1$ the following hold.

(1) For each $m\ge 1$ except possibly $m=1, 2, 4$ and $8$ the generating function $f_{1,m}(z)$
has the unit circle $\{ |z|=1\}$ as a natural boundary to analytic continuation.

(2) If the $3x+1$ Conjecture is true, then  for \,$m=1, 2, 4$ and $8$
the generating function  $f_{1, m}(z)$
analytically continues to a rational function of $z$. If the $3x+1$ Conjecture is false,
then each of these four functions has  the unit circle $\{ |z|=1\}$ as a natural boundary to analytic continuation.
\end{theorem}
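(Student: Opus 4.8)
The plan is to reduce the analytic statement to a purely combinatorial question about the set $\sO_1^{-}(m)$ and then settle that question by exploiting the branching structure of the inverse map near the cycle $\{1,2\}$. Specializing Theorem \ref{th10a} to $k=1$, I observe that since there is only the trivial residue class modulo $|k|=1$, the set $X$ in condition (2) is either empty or everything, so condition (2) asserts precisely that $\sO_1^{-}(m)\cap\NN^{+}$ is finite or cofinite. Combining the equivalence (1)$\Leftrightarrow$(2) with the P\'olya--Carlson dichotomy (rational or natural boundary) quoted in the introduction, I get that $f_{1,m}(z)$ has the unit circle as a natural boundary if and only if $\sO_1^{-}(m)$ is neither finite nor cofinite. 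Note first that $\sO_1^{-}(m)$ is always infinite, since $T_1(2n)=n$ forces $\{2^{j}m:j\ge 0\}\subseteq\sO_1^{-}(m)$. Hence the whole theorem comes down to deciding, for each $m$, whether $\sO_1^{-}(m)$ is cofinite.

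For part (1), with $m\notin\{1,2,4,8\}$, the plan is to show $\sO_1^{-}(m)$ is not cofinite by producing an element $m'$ incomparable to $m$, i.e. with neither $m\to m'$ nor $m'\to m$; by the trichotomy recalled in the introduction the two backward orbits are then disjoint, and since $\sO_1^{-}(m')$ is infinite this makes $\NN^{+}\setminus\sO_1^{-}(m)$ infinite. I would split into two cases. If the forward orbit of $m$ never reaches the cycle $\{1,2\}$, take $m'=1$: its forward orbit is $\{1,2\}$, so $1\not\to m$ because $m\notin\{1,2\}$, while $m\not\to 1$ by assumption. If instead $m\to 1$, then since the only entry into $\{1,2\}$ from outside is through $4\to 2$ and the only preimage of $4$ is $8$, the orbit of $m$ must pass $8\to 4\to 2\to 1$; as $m\ne 8$, it reaches $8$ from exactly one of the two preimages of $8$, namely $5$ or $16$. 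Taking $m'$ to be whichever of $5,16$ is \emph{not} the predecessor of $8$ in the orbit of $m$ gives an incomparable pair: the forward orbit of this $m'$ is either $\{5,8,4,2,1\}$ or $\{16,8,4,2,1\}$ and does not contain $m$ (so $m'\not\to m$), while $m\not\to m'$ because $m$ reaches $8$ through the other preimage. In every case $\sO_1^{-}(m)$ is not cofinite, so $f_{1,m}(z)$ has a natural boundary.

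For part (2), with $m\in\{1,2,4,8\}$, I would argue both directions. If the $3x+1$ Conjecture holds, a direct computation with the gateway structure above gives $\sO_1^{-}(1)=\sO_1^{-}(2)=\NN^{+}$, $\sO_1^{-}(4)=\NN^{+}\setminus\{1,2\}$, and $\sO_1^{-}(8)=\NN^{+}\setminus\{1,2,4\}$, all cofinite, whence $f_{1,m}(z)$ is rational by Theorem \ref{th10a}. If the Conjecture fails, let $D$ be the set of integers whose forward orbit never enters $\{1,2\}$; then $D$ is nonempty, and $T_1(2n)=n$ shows $D$ is closed under doubling, hence infinite. Since each of $1,2,4,8$ reaches the cycle, any $n\in D$ satisfies $n\not\to m$, so $D\subseteq\NN^{+}\setminus\sO_1^{-}(m)$ and $\sO_1^{-}(m)$ is not cofinite; thus $f_{1,m}(z)$ again has a natural boundary.

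The hard part will be the unconditional non-cofiniteness in part (1) when $m$ does reach the cycle: everything hinges on the fact that the backward trunk $1\leftarrow 2\leftarrow 4\leftarrow 8$ is unbranched and that the first branch occurs at $8$, with the two preimages $5$ and $16$. This is exactly the structural feature that singles out $m=1,2,4,8$ as the only possible exceptional values, and identifying the correct incomparable witness $m'$ from the two branches is the crux; the remaining steps are routine bookkeeping with the trichotomy and the $k=1$ specialization of Theorem \ref{th10a}.
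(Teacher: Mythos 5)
Your proof is correct, and it rests on the same toolkit as the paper's --- the $k=1$ specialization of Theorem \ref{th10a} (rational $\Leftrightarrow$ cofinite, since backward orbits are infinite by doubling), the P\'olya--Carlson dichotomy, the containment-or-disjointness trichotomy, and the gateway structure $\{5,16\} \to 8 \to 4 \to 2 \to 1$ --- but it organizes the case analysis in a genuinely different way. The paper proves part (1) by splitting on the \emph{global} truth value of the $3x+1$ Conjecture: if the Conjecture is false, a hypothetical orbit disjoint from $\sO_1^{-}(1)$ certifies non-cofiniteness for every $m$ at once; if it is true, every $m \notin \{1,2,4,8\}$ lies in exactly one of the disjoint infinite orbits $\sO_1^{-}(5)$, $\sO_1^{-}(16)$, and the other serves as the witness; part (1) then holds unconditionally because it holds under both alternatives. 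You instead split on the behavior of the \emph{individual} $m$ --- whether or not $m \to 1$ --- and produce an explicit incomparable witness in each case ($m'=1$ when $m \not\to 1$; whichever of $5,16$ is off the path of $m$ when $m \to 1$). This makes the unconditional nature of part (1) transparent and supplies a concrete disjoint orbit for each $m$, at the cost of the (correct, but worth spelling out) observation that the forward orbit of $m$ visits $8$ exactly once, so that exactly one of $5,16$ can precede it. In part (2) your true-case computation matches the paper's, and is in fact slightly more careful: since $T_1(1)=2$, one has $\sO_1^{-}(2)=\NN^{+}$, i.e.\ $p_2=0$, whereas the paper's list of polynomials $p_m$ mistakenly gives $p_2=z$; in the false case your doubling-closed set $D$ of integers never reaching the cycle plays exactly the role of the paper's disjoint backward orbit.
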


Secondly we  consider natural boundaries for the $3x-1$ function.

\begin{theorem}\label{th12}
Consider the $3x-1$ map $T_{-1}$ on the positive integers $\NN^{+}$.
For every starting value $m \ge 1$ 
the  backward orbit
generating function
$$f_{-1,m}(z) = \sum_{ n \in \sO_{-1}^{-}(m)} z^n$$ 
has the unit circle $\{ |z|=1\}$ as a natural boundary to analytic continuation.
\end{theorem}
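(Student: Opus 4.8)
My plan is to deduce Theorem~\ref{th12} from Theorem~\ref{th10a} (applied to a single orbit, $\ell = 1$, with $k = -1$), the P\'{o}lya--Carlson dichotomy recalled in the introduction, and the three known periodic orbits of $T_{-1}$. Since $|k| = 1$ there is only one residue class modulo $1$, so the set $X$ of condition~(2) is forced to be either empty or that single full class; correspondingly $h(z)$ is either $0$ or $z/(1-z) = \sum_{n \ge 1} z^n$. Thus if $f_{-1,m}(z)$ were rational, Theorem~\ref{th10a} would make $\sO_{-1}^{-}(m) \cap \NN^{+}$ either finite or cofinite in $\NN^{+}$. Finiteness is immediately excluded, since $T_{-1}(2n) = n$ gives $\{2^j m : j \ge 0\} \subseteq \sO_{-1}^{-}(m)$. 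It therefore suffices to rule out cofiniteness: once $\sO_{-1}^{-}(m)$ is shown to be neither finite nor cofinite, $f_{-1,m}(z)$ is not rational, and since its coefficients lie in $\{0,1\}$ with infinitely many nonzero, its radius of convergence is exactly $1$, so the P\'{o}lya--Carlson theorem forces the unit circle to be a natural boundary.

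The heart of the argument is the exclusion of cofiniteness, and here I would use the three pairwise disjoint finite cycles $C_1 = \{1\}$, $C_2 = \{5,7,10\}$, and $C_3 = \{17,25,37,55,82,41,61,91,136,68,34\}$ of $T_{-1}$ recorded in the introduction. Because each $C_i$ is forward invariant and the three are disjoint, a single forward trajectory can enter at most one of them; in particular the forward orbit $\sO_{-1}^{+}(m)$ meets at most one cycle, so some cycle $C$ is disjoint from $\sO_{-1}^{+}(m)$. I claim the entire basin $\sO_{-1}^{-}(C) := \bigcup_{p \in C} \sO_{-1}^{-}(p)$ is then disjoint from $\sO_{-1}^{-}(m)$. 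Indeed, if some $n$ belonged to both, the forward orbit of $n$ would contain both $m$ and some $p \in C$; whichever of $m, p$ occurs first along this orbit, applying further iterates and using the forward invariance of $C$ would place a point of $C$ in $\sO_{-1}^{+}(m)$ (or force $m \in C$), contradicting the choice of $C$. Since $\{2^j p : j \ge 0\} \subseteq \sO_{-1}^{-}(C)$ for any $p \in C$, the basin is infinite, so $\sO_{-1}^{-}(m)$ omits infinitely many positive integers and cannot be cofinite.

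The main obstacle is precisely this cofiniteness step, and it is where the $3x-1$ map is more tractable than the $3x+1$ map. The argument uses only the \emph{existence} of at least two disjoint cycles, not any claim that the listed cycles are exhaustive or that orbits converge, so it is completely unconditional: for every $m \ge 1$ the forward orbit leaves at least one cycle and its infinite backward basin entirely untouched, furnishing the required infinite omitted set. By contrast, the $3x+1$ map has only the single known cycle $\{1,2\}$, which provides no spare cycle for the values $m = 1,2,4,8$ whose forward orbits run into $\{1,2\}$; this is exactly the source of the four exceptional cases in Theorem~\ref{th11}, where cofiniteness of $\sO_{1}^{-}(m)$ is equivalent to the $3x+1$ Conjecture.
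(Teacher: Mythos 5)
Your proposal is correct and takes essentially the same route as the paper: rationality of $f_{-1,m}(z)$ is ruled out via Theorem~\ref{th10a} specialized to modulus $|k|=1$ (so the orbit would have to be finite or cofinite in $\NN^{+}$, both impossible), the impossibility coming from the three known disjoint cycles of $T_{-1}$, and then the P\'{o}lya--Carlson theorem yields the natural boundary. The only difference is one of detail: the paper simply cites as known that $\sO_{-1}^{-}(1)$, $\sO_{-1}^{-}(5)$, $\sO_{-1}^{-}(17)$ are infinite pairwise disjoint sets, whereas you derive the needed disjointness from scratch by showing the forward orbit of $m$ can meet at most one cycle and that the basin of an unmet cycle is disjoint from $\sO_{-1}^{-}(m)$.
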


 This result proves a conjecture of Berg and Opfer \cite[Conjecture  2.4]{BO13},
 which concerns analytic continuability of the three functions $\eta_1 (z) = f_{-1, 1}(z), \eta_2(z) = f_{-1, 5}(z)$
 and $\eta_3(z) = f_{-1, 17}(z)$.
 Theorems \ref{th11} and \ref{th12} are  proved in Section \ref{sec4} and make use of the P\'{o}lya-Carlson theorem
 given in Section \ref{sec2}.

Finally, in Section \ref{sec5} we establish  an analogous  result for the general case $k \equiv \pm 1 \, (\bmod\, 6)$,
which is  less specific about exceptional cases.

\begin{theorem}\label{th14}
Consider the $3x+k $ map $T_{k}$ with $k \equiv \pm 1 \, (\bmod \, 6)$ on the positive integers $\NN^{+}$.
Then  for all but finitely many starting values  $m \ge 1$ the 
 backward orbit generating
function $f_{k, m}(z) $ 
has the unit circle $\{|z|=1\}$ as a natural boundary to analytic continuation.
\end{theorem}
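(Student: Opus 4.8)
The plan is to reduce the statement, via the P\'olya--Carlson theorem recalled in Section \ref{sec2}, to the assertion that $f_{k,m}(z)$ is a rational function for only finitely many $m \ge 1$. Each $f_{k,m}(z)$ is a power series with coefficients in $\{0,1\}$, and since every backward orbit $\sO_{k}^{-}(m)$ contains the infinite set $\{2^{j}m : j \ge 0\}\subseteq \NN^{+}$, it has infinitely many nonzero coefficients and radius of convergence exactly $1$; hence by P\'olya--Carlson it either is rational or has $\{|z|=1\}$ as a natural boundary. So it suffices to prove that
\[
R := \{\, m \ge 1 : f_{k,m}(z)\text{ is rational}\,\}
\]
is finite. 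Applying Theorem \ref{th10a} to the single orbit $\sS = \sO_{k}^{-}(m)$ (the case $\ell=1$), membership $m\in R$ is equivalent to the existence of a set $X_{m}$ of residue classes $(\bmod\,|k|)$, closed under $r\mapsto 2r$ and $r\mapsto 3r$, beyond some threshold of which $\sO_{k}^{-}(m)$ consists exactly of the integers lying in the progressions $X_{m}$. Because each orbit is infinite, $X_{m}\neq\emptyset$ for every $m\in R$.

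Next I would organize $R$ using the trichotomy for backward orbits stated before Theorem \ref{th10a}. Order $R$ by declaring $m\preceq m'$ when $\sO_{k}^{-}(m)\subseteq\sO_{k}^{-}(m')$, equivalently when $m'$ lies in the forward orbit of $m$; passing to the quotient that identifies elements with equal backward orbits (two such $m,m'$ are mutually forward-reachable, hence lie in a common cycle, so each class is finite) turns $\preceq$ into a genuine partial order. Two $\preceq$-incomparable elements have disjoint backward orbits, and if $a\in X_{m}\cap X_{m'}$ then every sufficiently large $n\equiv a$ would lie in both orbits, contradicting disjointness; thus incomparable elements carry disjoint nonempty residue sets, and every antichain has size at most $|k|$. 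It remains to bound the lengths of chains.

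The heart of the argument is that every chain in $R$ is finite. Along a chain the sets $X_{m}$ increase with $\preceq$ and, living in the finite set of residues $(\bmod\,|k|)$, stabilize to some $X^{\ast}$ past a chain element $m_{0}$. For any larger chain element $m'$ one then has $X_{m'}=X^{\ast}$, so beyond the thresholds the nested difference $\sO_{k}^{-}(m')\setminus\sO_{k}^{-}(m_{0})$ contains no integers and is therefore \emph{finite}. On the other hand this difference is the union, over the forward path $m_{0}\to T_{k}(m_{0})\to\cdots\to m'$, of the side branches hanging off each vertex; whenever a vertex $n$ on this path is a branch point, that is, has the additional preimage $(2n-k)/3$ (a positive odd integer) besides $2n$, the entire infinite orbit $\sO_{k}^{-}$ of that extra preimage lies inside the difference, forcing it to be infinite. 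Hence no vertex beyond $m_{0}$ can be a branch point. But a vertex fails to be a branch point only when its sole preimage is even, i.e. when it is obtained from the preceding vertex by halving; an infinite branch-free forward tail would thus produce an infinite strictly descending chain of positive integers, and any cycle reached by the tail, containing an odd element whose successor $(3c+k)/2$ has the two distinct preimages $c$ and $3c+k$, likewise supplies a branch point. Letting $m'$ range over arbitrarily large chain elements exhausts the whole forward tail of $m_{0}$, yielding a contradiction; therefore each chain is finite.

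Finally, after the quotient, $R$ is a poset of width at most $|k|$ all of whose chains are finite. By Dilworth's theorem in the form valid for posets of finite width, $R$ decomposes into at most $|k|$ chains, each finite, so $R$ is finite; restoring the finitely many identified classes keeps it finite, completing the reduction. I expect the chain-finiteness step to be the main obstacle: one must argue carefully that stabilization of the residue sets $X_{m}$ is genuinely incompatible with the unavoidable branching of the inverse orbit, checking the branch-point count both along the transient tail and inside any cycle the forward orbit may enter.
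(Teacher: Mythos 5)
Your overall architecture is a genuinely different route from the paper's. The paper first splits $\ZZ$ into the bi-invariant classes $X_{a,k}$ of Lemma \ref{lem51}, uses the closure of the residue set under $r\mapsto 2r$, $r\mapsto 3r$ (so that, within a class, a rational orbit must be cofinite in the positive part of that class), and then inspects the inverse tree of a single maximal orbit, locating the lowest branch point whose two subtrees each contain infinitely many positive integers. You instead keep Theorem \ref{th10a} with $\ell=1$, make the set $R$ of rational starting values into a poset under inclusion of backward orbits, bound antichains by $|k|$ through disjointness of the nonempty residue sets $X_m$, kill infinite chains by a branching argument, and finish with Dilworth. The reduction via P\'olya--Carlson, the antichain bound, and the Dilworth step are all correct, and this organization is attractive in that it never needs Lemma \ref{lem51}.

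However, the chain-finiteness step --- which you rightly identify as the crux --- has a genuine gap, and it sits exactly on what the paper calls the ``third additional feature'' of general $3x+k$ maps: backward orbits of positive integers may contain infinitely many \emph{negative} integers, and Theorem \ref{th10a} says nothing about those. Condition (2) describes only $\sO_{k}^{-}(m)\cap\NN^{+}$, so stabilization $X_{m'}=X^{\ast}$ yields only that $\sO_{k}^{-}(m')\setminus\sO_{k}^{-}(m_{0})$ contains \emph{finitely many positive} integers --- not that it is finite. Hence exhibiting an infinite side branch is not yet a contradiction; you need a side branch with infinitely many positive elements. The parenthetical claim that the extra preimage $(2n-k)/3$ is ``a positive odd integer'' is what is supposed to supply this, but it is false in general: for $k>0$ and a path vertex $n<k/2$ the odd preimage is negative, and for $k>0$ a negative integer's entire backward orbit is negative (positives map to positives), so such a branch contributes no positive integers at all. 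You also misidentify the side branch: when the path reaches $v$ through its odd preimage $w$ (i.e.\ $v=(3w+k)/2$), the extra preimage at $v$ is the \emph{even} one, $2v$, not the odd one. This last observation is in fact the repair: an infinite path of positive integers cannot be eventually all-halving, so it contains infinitely many odd vertices $w$ (for $k<0$ one first notes the path must stay positive, since negatives then map to negatives while arbitrarily late path vertices are positive chain elements); at the successor $v=T_{k}(w)$ the side branch root is $2v>0$, it is not on the path, and $\sO_{k}^{-}(2v)\supseteq\{2^{j+1}v : j\ge 0\}$ places infinitely many positive integers in the difference, giving the contradiction. (The cycle case is vacuous for an infinite chain, since an eventually periodic forward orbit is a finite set.) With these repairs your proof goes through, but as written the decisive step rests on a positivity claim that fails precisely in the situation the theorem must handle.
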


Establishing Theorem \ref{th14}  involves two additional difficulties.
First, the  orbits of $T_k$  for general $k$ are more complicated than those for $k =\pm 1$;   
they  can have several 
collections of residue classes $(\bmod \, |k|)$ that are  forward-and-backward invariant sets for the general $3x +k$-function 
as given in Lemma \ref{lem51}. (There is only one such class for $k = \pm 1$.)
Second,  some orbits may simultaneously contain infinitely 
many positive integers and   infinitely negative integers.  
This latter fact necessitated 
our  definition of  orbit generating functions  in \eqref{eq14}  to require  intersecting the orbit with the set of 
positive integers.


\subsection{Complexity of backwards orbits in the $3x+1$ problem}\label{sec12}

The $3x+1$ Conjecture asserts that the backward orbit $\sO_{1}^{-}(1)$
has a simple form.
However the simplicity of form of this statement  
seems to be an illusion that hides the difficulty of the problem.
The results above show that most  backward orbits $\sO_{1}^{-}(m)$ for variable $m$
 have considerable complexity; that is,  they cannot have the regular structure
required for the backward orbit generating function to be a rational function.

Indeed, the backward orbits of the $3x+k$ maps 
for different $k$ appear to have an extremely complicated structure.
The complexity of backward orbits was  noted already for the  $3x-1$ problem
by  Berg and Opfer \cite[Figure 1]{BO13}. They presented numerical data of membership in the three disjoint backward orbits
$\sO_{-1}^{-}(1),  \sO_{-1}^{-}(5),\sO_{-1}^{-}(17)$ in blocks of consecutive integers.
Each of these three orbits appears experimentally 
 to contain  a positive density of integers, and  the members of the 
different backward orbits appear to interlace 
in a complicated way on these blocks of integers. 
It would be interesting to formulate and study statistics which measure the amount
of intertwining complexity between two disjoint backward orbits of a fixed function $T_k$.


\subsection{Effective Computability Issues}
The computational problem of deciding whether, given input data $(k, m)$, the
function  $f_{k,m}(z)$ has $\{ |z| =1\}$ as a natural boundary
 is not known to be effectively computable.  
There does  exist a computational algorithm\footnote{By an algorithm we mean a procedure which can be
programmed on a Turing machine.  Such a procedure, however, is not certified to halt on all inputs.}  which, given $k$ as input,  {\em if it halts,}
will list a finite exceptional set $E_k \subset \ZZ$ and a proof that for all integers $m$ not in $E_k$,
the generating function $f_{k, m}(z)$ has the unit circle as a natural boundary
to analytic continuation.  This algorithm is described at the end of Section \ref{sec5}.
Standard conjectures analogous to the $3x+1$
Conjecture would imply that this algorithm will always halt.
At present it remains an open problem to prove (or disprove) that this algorithm always halts. 

There are two obstacles to
 proving effective computability for individual inputs $(k, m)$.
 \begin{enumerate}
 \item
 There is no effective algorithm  known which when
given $m_1, m_2$ will  determine whether 
the backward orbits  of $m_1$ and $m_2$ under $T_k$ are disjoint. 
\item
It is not  known whether each bi-invariant component of the $3x+k$ map
contains a finite cycle, although conjecturally this is always the case (\cite[Sect. 3.1]{Lag90}).
\end{enumerate}
Each of these obstacles appears to be an intractable problem at present.

\subsection{Contents of paper.}

In Section \ref{sec2} we recall several well known results on the
structure of power series with integer coefficients being rational functions
or having a natural boundary to analytic continuation.
In Section \ref{sec3} we prove Theorem \ref{th10a}; in Section \ref{sec4} 
 we prove Theorems \ref{th11} and  \ref{th12}, and  in Section \ref{sec5} we prove
 Theorem \ref{th14}.

In Section \ref{sec6} we complement Theorem \ref{th11}
by showing  that standard conjectures on the iteration of
the $3x+k$ maps imply that there will exist infinitely many cases where a
a finite sum of backward orbit generating functions is a rational function.

In the concluding section  we describe other known results for
different analytic generating functions associated to $3x+k$ mappings which are
(or may sometimes be) rational functions. 

%
%
%
%

\section{Preliminary Facts}\label{sec2}

The proofs of the paper use several well-known 
results in analytic function theory.

\subsection{Fabry Gap Theorem}

The  following  basic result on natural boundaries of analytic
functions given by lacunary expansion can be applied to  very special cases 
of backwards orbits above. 
\begin{theorem} {(\em Fabry Gap Theorem)}\label{th31}
Let $f(z) = \sum_{j=0}^{\infty} a_j z^{n_j}$ be a power series  a positive finite radius of convergence $R$,
and have gaps between its exponents $n_j$ in the sense that 
$$
\lim_{j \to \infty} \frac{n_j}{j} = +\infty.
$$ 
 Then the disk $B_R(0) = \{ |z| < R\}$
is a maximal domain of holomorphy for $f(z)$, i.e. the circle $|z|=R$ is a natural boundary to
analytic continuation.
\end{theorem}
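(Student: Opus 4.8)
The plan is to prove the statement directly, by showing that \emph{every} point of the circle $|z|=R$ is singular for $f$. Two reductions come first. Under the substitution $z \mapsto z/R$ both the gap condition $n_j/j \to \infty$ and the finiteness of the radius are preserved, so I may assume $R=1$. Next, for a fixed boundary point $\zeta = e^{i\theta}$, the rotated series $g(z) := f(e^{i\theta}z) = \sum_j a_j e^{i n_j \theta} z^{n_j}$ has the same exponent sequence $(n_j)$, the same radius $1$, and is singular at $z=1$ if and only if $f$ is singular at $\zeta$. Hence it suffices to prove the single assertion: any power series obeying the Fabry gap condition with radius of convergence $1$ must be singular at $z=1$. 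Since the set of singular points is closed, this alone would already force the whole circle, but the rotation argument in fact delivers every individual point directly.

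The substance of the matter is that the gap hypothesis is exactly a zero-density statement: writing $N(t) = \#\{j : n_j \le t\}$, one checks that $n_j/j \to \infty$ is equivalent to $N(t)/t \to 0$, i.e. the set of exponents has upper density $0$. My main argument would be by contradiction: suppose $f$ were holomorphic in a neighborhood of $z=1$, hence across a small boundary arc. I would then represent the coefficients $c_{n}$ (where $c_n = a_j$ if $n = n_j$ and $c_n = 0$ otherwise) by a Cauchy integral over a contour that coincides with a circle of radius slightly less than $1$ away from $z=1$ but bulges outward to radius $1+\delta$ across the arc of regularity. Inserting a suitable kernel or multiplier polynomial that is large on the bulge and small elsewhere, and exploiting the sparseness of the exponents to keep the diagonal contributions dominant, the aim is to show $\limsup_{j} |c_{n_j}|^{1/n_j} < 1$, contradicting that the radius of convergence equals $1$.

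The delicate point — and the reason the elementary proof of the Hadamard gap theorem does not transfer — is that the Fabry condition is far weaker than a ratio gap $n_{j+1}/n_j \ge \lambda > 1$, so the crude overconvergence substitution $z = \tfrac12(w^p + w^{p+1})$ is unavailable. Converting mere density $0$ into a genuine gain in the radius is the crux, and I expect this to be the main obstacle. The honest route is through quantitative power-sum inequalities in the style of Turán's second main theorem, which bound $\max_\nu |\sum_j b_j z_j^\nu|$ from below in terms of $|\sum_j b_j|$ and of the number of frequencies, precisely when the frequency set is thin; alternatively one may cite Pólya's theorem that a density-$0$ exponent set forces a singularity on every boundary arc, though that result is itself the hard analytic core. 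Either way, once the power-sum estimate is in hand the remaining contour bookkeeping and the assembly of the contradiction are routine.
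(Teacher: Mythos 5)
Your proposal has a genuine gap: the entire analytic core of the theorem is deferred rather than proved. Your preliminary reductions are fine --- rescaling to $R=1$, rotating so that it suffices to show $z=1$ is singular, and the observation that $n_j/j \to \infty$ is equivalent to the exponent set having counting function $N(t) = o(t)$ --- but these are routine. The substance of Fabry's theorem is precisely the step you leave open: converting ``$f$ is holomorphic across a boundary arc'' plus ``the exponents have density zero'' into the contradiction $\limsup_j |a_j|^{1/n_j} < 1$. Your sketch of a Cauchy integral over a bulged contour with ``a suitable kernel or multiplier polynomial'' names the shape of such an argument but constructs nothing: no kernel is exhibited, no estimate is carried out, and you yourself identify this as ``the main obstacle.'' The two escape routes you offer do not close it. Citing P\'{o}lya's theorem (every arc of length exceeding $2\pi$ times the upper density of the exponents contains a singularity) is circular, since its density-zero case \emph{is} the statement being proved, as you concede. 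Invoking Tur\'{a}n's second main theorem is a legitimate strategy --- Tur\'{a}n did prove Fabry's theorem this way --- but the passage from the power-sum inequality to the gap theorem is itself a substantial argument (choice of the arithmetic-progression segments of coefficients, the interplay between the arc of regularity and the power-sum lower bound), not the ``routine contour bookkeeping'' you describe. As written, the proposal is a plan whose hard part is a black box.

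One smaller inaccuracy: your aside that, since the set of singular points is closed, singularity at $z=1$ ``alone would already force the whole circle'' is false --- closedness of the singular set plus one singular point yields nothing about other points (every power series with finite radius has at least one boundary singularity). It is the rotation argument, applied to each $\zeta = e^{i\theta}$ separately using the fact that rotation preserves the gap hypothesis, that covers the full circle; you do use that argument, so this slip is harmless, but the sentence should be deleted. For calibration: the paper itself offers no proof of this statement --- it is quoted as a classical result of Fabry (1896), strengthened by Faber (1906), with Remmert's book cited for a modern treatment --- so the expected standard here is either a correct self-contained proof or a precise citation, and your text currently falls between the two.
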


The original result of Fabry \cite{Fabry1896} in 1896 proved this theorem under the weaker assumption that
the exponents are lacunary in the sense that there is a constant $C>1$
such that $n_{k+1}/n_k > C$ for all sufficiently large $k$.
The stronger result stated above is due to Faber \cite{Fab06} in 1906,
see Remmert \cite[p. 256]{Rem98}. There are now much stronger gap theorem results  known, 
showing that if the average gap size tends to infinity then there is a natural
boundary, see  \cite[p. 256]{Rem98}.

Theorem \ref{th31} has an immediate application to the $3x+k$ problem based
on the observation that 
for any $k \equiv \pm 1~(\bmod \, 6)$. the backwards orbit of any $m \equiv 0 \, (\bmod \, 3)$ is
 $ \sO_{k}^{-1}(m) = \{ 2^k m:  ~\, k \ge 0 \}.$
 It follows that for  any positive $m \equiv 0 \, (\bmod \, 3)$  the generating function
 of its inverse orbit 
 $$
 f_{k, m}(z) = \sum_{n \in \sO_{k}^{-}(m)} z^n  = \sum_{n=0}^{\infty}  z^{ m 2^n}
 $$
  satisfies the hypotheses of the Fabry gap theorem, so  has
 $\{|z|=1\}$ as a natural boundary. 

 For the $3x+1$ function $T_1$ it has been conjectured that 
for $m \not\equiv 0 ~(\bmod \, 3)$ the expected number of inverse iterates below $x$
is asymptotically bounded below by $c(m)x$ for a positive constant, see \cite[Conjecture A]{AL95a}.
If such a conjecture were true, then the existing gap-type theorems do not apply
to infer  the existence of a natural boundary to analytic continuation. 
The  main results of this paper are proved without appeal to  the Fabry Gap Theorem. 

\subsection{P\'{o}lya-Carlson Theorem}

The following dichotomy theorem was conjectured by P\'{o}lya \cite{Pol16} in 1915, and proved by
Carlson \cite{Car21} in 1921. 
\begin{theorem} {(\em P\'{o}lya-Carlson Theorem)}\label{th32}
Let $f(z) = \sum_{n=0}^{\infty} a_n z^{n}$ have integer coefficients and have
radius of convergence $R=1$. Then exactly one of the following holds.
\begin{enumerate}
\item[(i)]
The power series  $f(z)$ has the circle $\{|z|=1\}$ as
a natural boundary to analytic continuation;
\item [(ii)]
The power series  $f(z)$ can be analytically continued
to a rational function of the form $\frac{p(z)}{(1-z^m)^n}$ for a polynomial $p(z) \in \ZZ[z]$
with $m, n$ positive integers.
\end{enumerate}
\end{theorem}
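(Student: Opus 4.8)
The plan is to prove the dichotomy by contradiction: assume $f$ is neither analytically continuable to a rational function nor possessed of the unit circle as a natural boundary, and derive a contradiction from the integrality of the coefficients. The two halves of the argument are an arithmetic lower bound and an analytic upper bound on the same sequence of integers, namely the Hankel determinants of $f$. First I would set up these determinants: for $f(z) = \sum_{n\ge 0} a_n z^n$ with $a_n \in \ZZ$, put
$$
H_n := \det\big( a_{i+j} \big)_{0 \le i,j \le n} \in \ZZ.
$$
The key structural input is Kronecker's criterion, that $f$ is a rational function if and only if $H_n = 0$ for all sufficiently large $n$. Thus if $f$ is \emph{not} rational, then $H_n \ne 0$ for infinitely many $n$, and for those $n$ integrality forces $|H_n| \ge 1$.

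Next I would use the assumption that $f$ does \emph{not} have the unit circle as a natural boundary. Then the set $S$ of singularities of $f$ on $\{|z|=1\}$ is a proper closed subset, and $f$ continues analytically across the complementary open arc(s). I would represent the coefficients by a Cauchy integral, $a_m = \frac{1}{2\pi i}\oint_\Gamma f(z)\, z^{-m-1}\,dz$, over a contour $\Gamma$ that lies in the domain of holomorphy, encircles the origin once, and is pushed \emph{outside} the unit circle except where it must dip inward to avoid $S$. Using Heine's identity I would then rewrite the Hankel determinant as a multiple contour integral,
$$
H_n = \frac{1}{(n+1)!}\,\frac{1}{(2\pi i)^{n+1}} \oint_\Gamma \!\cdots\! \oint_\Gamma \prod_{0\le i<j\le n}(z_i - z_j)^2 \prod_{k=0}^{n} f(z_k)\, z_k^{-(n+1)}\, dz_k.
$$

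The real work, and the main obstacle, is estimating this integral. The Vandermonde-squared factor is controlled by the transfinite diameter (logarithmic capacity) of $\Gamma$, while the factor $\prod_k z_k^{-(n+1)}$ contributes the geometry of $|z|$ along $\Gamma$ through the weight $-\log|z|$. The crux is to choose $\Gamma$ so that the relevant weighted capacity is strictly less than $1$: because $S$ is a proper subarc one has room to keep $\Gamma$ at radius $>1$ along most of its length, paying only a controlled price where it must dip inside near $S$. A potential-theoretic argument using the equilibrium measure then yields a bound of the shape $|H_n| \le C\,\rho^{\,n^2}$ with $\rho < 1$ for all large $n$. This contradicts $|H_n|\ge 1$ along the infinite set of nonvanishing determinants, so $f$ must in fact be rational. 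The delicate points — constructing $\Gamma$ uniformly and converting the capacity-below-one statement into super-exponential decay — are the technical heart of Carlson's theorem.

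Finally, once $f$ is known to be rational with integer coefficients and radius of convergence $1$, I would pin down the normal form in (ii). All poles lie on $\{|z|=1\}$; writing $f = p/q$ in lowest terms and appealing to a Fatou/Kronecker-type argument on the integer linear recurrence satisfied by the $a_n$ (which forces the poles of such a series to be roots of unity), the denominator divides a power of $1 - z^m$ for some $m$. Clearing to this common denominator produces the stated form $p(z)/(1-z^m)^n$ with $p \in \ZZ[z]$ and $m,n$ positive integers, completing the proof.
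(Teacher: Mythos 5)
First, a structural point: the paper itself offers no proof of this theorem. It is stated as a classical citation (P\'olya 1916, Carlson 1921, with pointers to Remmert and to Bell--Coons--Rowland for later refinements) and is used purely as a black box in Sections 4 and 5. So your attempt can only be measured against the classical proof---and your outline does follow its standard architecture: Kronecker's criterion via Hankel determinants $H_n$, integrality forcing $|H_n|\ge 1$ whenever $H_n\neq 0$, a Heine-type multiple contour integral, and a capacity bound of the shape $|H_n|\le C\rho^{n^2}$ with $\rho<1$. Your concluding reduction of a rational integer series with radius of convergence $1$ to the form $p(z)/(1-z^m)^n$ (Fatou's lemma plus Kronecker's theorem on algebraic integers with all conjugates in the closed unit disk) is also sound.

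The genuine gap sits exactly at the step you yourself flag as ``the technical heart,'' and the heuristic you give for bridging it is wrong in general. The singular set $S$ is merely a closed proper subset of the circle (not a ``proper subarc''), and it can have angular measure arbitrarily close to $2\pi$, e.g.\ when $f$ continues across only one short arc. In that case your contour $\Gamma$ must lie \emph{inside} the unit circle along most of its length, so the picture ``keep $\Gamma$ at radius $>1$ along most of its length, paying only a controlled price where it dips near $S$'' is exactly backwards, and no crude length-versus-dip trade-off yields $\rho<1$. What the classical proof actually does is invert first: set $w=1/z$ and $g(w)=\sum_n a_n w^{-n-1}$, which is holomorphic off the compact set $E$ consisting of the closed unit disk minus neighborhoods of the (reflected) regular arcs; Heine's identity then takes the clean unweighted form $H_n=\frac{1}{(n+1)!\,(2\pi i)^{n+1}}\oint\cdots\oint\prod_{i<j}(w_i-w_j)^2\prod_k g(w_k)\,dw_k$, and P\'olya's inequality gives $\limsup_n|H_n|^{1/n^2}\le\operatorname{cap}(E)$. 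The crux, $\operatorname{cap}(E)<1$, is then not a contour-construction issue but a potential-theoretic fact: the equilibrium measure of the closed unit disk is normalized arc length on the \emph{whole} circle, so a compact subset of the closed disk that omits a neighborhood of some boundary arc must have capacity strictly below $1$. Your sketch contains neither this reduction nor any substitute for it. A smaller but consequential slip: your Heine identity is miswritten---for the $(n+1)\times(n+1)$ Hankel determinant of a series at the origin the kernel is $\prod_k f(z_k)\,z_k^{-(2n+1)}\,dz_k$, not $z_k^{-(n+1)}$ (already at $n=1$ your formula returns $-a_0^2$ instead of $a_0a_2-a_1^2$); since this exponent determines the weight $-\log|z|$ in the weighted equilibrium problem you propose to solve, the error propagates into precisely the estimate you need.
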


 This results has since been strengthened  and extended in various ways,
see Remmert \cite[p. 265]{Rem98} and Bell, Coons and Rowland \cite{BCR12}.

\subsection{Skolem-Mahler-Lech theorem}

The Skolem-Mahler-Lech theorem has many different incarnations,
for which see van der Poorten \cite{vdP89} and Everest et al \cite[Chap. 2]{EPSW03}.
It can be formulated in terms of zeros of recurrence sequences or zeros of Taylor series
coefficients of rational functions; for more recent algebraic geometric versions, see \cite{BL14}. 
We will use the following version. 

\begin{theorem} {\rm (Skolem-Mahler-Lech theorem)}\label{th33}
Let $R(z) \in \CC(z)$ be a rational function. Suppose that 
$R(z)$ is holomorphic at $z=0$ 
and let its Taylor series
expansion around $z=0$ be 
$$
R(z) = \sum_{n=0}^{\infty} c_n z^n.
$$
Then the set  of indices of vanishing Taylor coefficients 
$Z(R; z=0) := \{n:  c_n =0\}$ 
can be partitioned into a finite (possibly empty)
set of complete half-infinite arithmetic progressions
$P(a; d)^{+} := \{ n: n \equiv a~(\bmod\, d)\, \mbox{with} \,\, n>0\}$, 
for some modulus $d$, up to the inclusion or exclusion of a finite set.
\end{theorem}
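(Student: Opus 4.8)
The plan is to prove the theorem by the standard $p$-adic method of Skolem, Mahler and Lech. The first step is to translate the statement about Taylor coefficients of a rational function into one about linear recurrence sequences. Since $R(z) \in \CC(z)$ is holomorphic at $z = 0$, writing $R(z) = P(z)/Q(z)$ in lowest terms gives $Q(0) \neq 0$, and the coefficients $c_n$ satisfy the linear recurrence whose characteristic polynomial is the reversal of $Q$. By partial fractions, for all sufficiently large $n$ one has an exponential polynomial representation
$$
c_n = \sum_{i=1}^{s} P_i(n)\, \alpha_i^{\,n},
$$
where the $\alpha_i$ are the distinct reciprocals of the poles of $R$ and the $P_i \in \CC[x]$ are nonzero. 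The finitely many initial indices outside this range are absorbed into the exceptional finite set, so it suffices to analyze the zero set of the exponential polynomial on the right.

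The second step reduces everything to algebraic data. The $\alpha_i$ and the coefficients of the $P_i$ lie in a finitely generated extension of $\QQ$, and by a specialization argument one reduces to the case where they lie in a number field $K$ (Lech's reduction covers the general characteristic-zero case; the case arising for us, where $R$ has rational coefficients and the $\alpha_i$ are algebraic, is already included). One then chooses a rational prime $p$ and a prime $\mathfrak{p}$ of $K$ above $p$ such that every $\alpha_i$ is a $\mathfrak{p}$-adic unit, embedding $K \hookrightarrow \CC_p$ via $\mathfrak{p}$. Let $M$ be the order of the finite group generated by the images of the $\alpha_i$ in the residue field, so that $\alpha_i^M \in 1 + \mathfrak{m}$ for the maximal ideal $\mathfrak{m}$; after enlarging $M$ by a suitable power of $p$, one arranges that $\alpha_i^M - 1$ is small enough that $t \mapsto (\alpha_i^M)^t = \exp\!\big(t \log(\alpha_i^M)\big)$ is given by a power series converging on the closed $p$-adic unit disk $|t|_p \le 1$.

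The third step applies Strassmann's theorem, the $p$-adic analogue of the fact that a nonzero analytic function has isolated zeros. Fix a residue class $r$ modulo $M$ and write $n = r + Mt$. Each term $P_i(r + Mt)\, \alpha_i^{\,r}\, (\alpha_i^M)^t$ becomes a convergent $p$-adic power series in $t$, and hence so does their sum
$$
F_r(t) := \sum_{i=1}^{s} P_i(r + Mt)\, \alpha_i^{\,r}\, (\alpha_i^M)^t ,
$$
which satisfies $F_r(t) = c_{r + Mt}$ for every nonnegative integer $t$. By Strassmann's theorem, either $F_r \equiv 0$, in which case every $n \equiv r \pmod M$ with $n \ge 0$ lies in $Z(R; z=0)$ and we obtain a complete arithmetic progression, or $F_r$ has only finitely many zeros in the unit disk, contributing finitely many integers. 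Running over the finitely many residue classes modulo $M$ and combining with the finite exceptional set from the first step yields the decomposition of $Z(R; z=0)$ into finitely many complete half-infinite progressions $P(a; M)^{+}$ together with a finite set.

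The main obstacle is the $p$-adic convergence bookkeeping in the second step. One must select the prime $p$ so that all the $\alpha_i$ are simultaneously $\mathfrak{p}$-adic units, which rules out only the finitely many primes dividing the norms of the $\alpha_i$ (hence such $p$ exists), and then choose the modulus $M$ with the correct power of $p$ so as to force each $|\log(\alpha_i^M)|_p$ into the radius of convergence of the $p$-adic exponential. The delicate point is that a \emph{single} pair $(p, M)$ must work uniformly for all $i$ at once, and that the resulting $F_r$ genuinely converges on the full disk $|t|_p \le 1$ rather than on some smaller disk; once this quantitative heart of the argument is settled, the remainder is formal.
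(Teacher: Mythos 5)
Your proof is correct, but it takes a genuinely different route from the paper: the paper does not prove the Skolem--Mahler--Lech theorem at all, it simply derives the statement from two cited results in Everest--van der Poorten--Shparlinski--Ward (their Theorem 1.5, that Taylor coefficients of rational functions are exactly the linear recurrence sequences, and their Theorem 2.1, the recurrence form of SML), pointing to van der Poorten for details. Your first step coincides with the paper's first citation, and the rest of your argument is an actual proof of what the paper cites as a black box: the standard Skolem--Mahler--Lech $p$-adic machine, with the exponential polynomial representation, the choice of a prime $\mathfrak{p}$ making all $\alpha_i$ units, the modulus $M$ (a multiple of the residue order times a power of $p$) forcing $\alpha_i^M$ into the domain of $p$-adic analytic interpolation, and Strassmann's theorem giving the dichotomy ``identically zero on a residue class versus finitely many zeros.'' What the paper's approach buys is brevity, appropriate for a classical theorem used as a tool; what yours buys is the actual mathematical content, and you correctly identify the quantitative heart (simultaneous unit conditions and convergence on the full disk $|t|_p \le 1$). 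Two small points of care: the reduction from $\CC(z)$ to number fields is better described as Lech's embedding argument (a finitely generated extension of $\QQ$ embeds in a $p$-adic field with prescribed nonzero elements becoming units, \`a la Cassels) rather than ``specialization,'' since a naive specialization can create or destroy zeros; and Strassmann is cleanest applied over the discretely valued field $K_{\mathfrak{p}}$ rather than $\CC_p$, which is all you need since the integer arguments $t$ lie in $\ZZ_p$. Neither affects correctness.
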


\begin{proof} This result is a consequence of two results given  in  \cite{EPSW03}.
The first is  a result showing  that the  Taylor coefficients of a rational function $R(z)$
satisfy a linear recurrence with constant coefficients, with a converse stating that  any Taylor expansion having
this property is the Taylor expansion of a rational function
(\cite[Theorem 1.5]{EPSW03}). The second result is  
the recurrence sequence form of the Skolem-Mahler-Lech
theorem (\cite[Theorem 2.1]{EPSW03}). For a detailed proof of the latter result, see \cite{vdP89}.
\end{proof}

Rather remarkably, the known proofs of the Skolem-Mahler-Lech theorem use $p$-adic methods
for some suitably chosen prime $p$, despite this theorem statement being
 an assertion over the complex numbers.
 
At present there is no general algorithm known to effectively determine the
set of zero coefficients of a rational function. 
 There does exist an effectively computable algorithm to determine a set of
arithmetic progressions satisfying the conclusion of the theorem, and in particular, there is
an effectively computable algorithm to determine
 whether the exceptional set $Z(R; z=0)$ is finite or infinite.  Evertse, Shlickewei
and Schmidt \cite{ESS02} establish an effective upper bound on the 
number of exceptional zeros in the case where $Z(R; z=0)$ is finite; they 
do not, however, obtain a bound on the size of such zeros.  

%
%
%
%

\section{backward Orbits of $3x+k$ maps and the SML Property }\label{sec3}

It is convenient to restate the conclusion of the Skolem-Mahler-Lech theorem as
asserting a general  property of sequences.

\begin{defi} \label{de31}
We say that a  sequence $\{ c_n: n \ge 0\}$ of complex numbers has  the {\em SML property}
if the set $\{ n \ge 0: c_n =0\}$ can be partitioned into a finite
set of  half-infinite arithmetic progressions
\begin{equation}\label{ap}
P^{+} (a; d):= \{ n: n \equiv a~(\bmod\, d)\, \mbox{with} \,\, n>0\}
\end{equation}
plus a finite set (possibly empty).
\end{defi}

 We now  prove Theorem \ref{th10a}.
 
 \begin{proof}[Proof of Theorem \ref{th10a}.]
We set $\sO := \Big( \bigcup_{i=1}^{\ell} \sO_{k}^{-}(m_i) \Big) \cap \NN^{+}.$
By the remark just before Theorem \ref{th10a}, we may
assume without loss of generality that the $\sO_{k}^{-}$ are pairwise disjoint sets.

  Suppose first that property (2) holds. Then letting the members of $X$ be
 the least nonnegative residues in the congruence classes $(\bmod \, |k|)$ we may write
 $$
 g(z) := \sum_{ n \in \sO} z^n = \sum_{j=0}^{k_0 -1} a_i z^i + \sum_{j \in X} \frac{z^{k_0 + j}}{1- z^{|k|}},
 $$
 which certifies that $g(z)$ is  a rational function, so property (1) holds.
 
  Now suppose property (1) holds. Then by hypothesis  
 $$
 g(z) = \sum_{j=1}^{\ell} f_{k, m_j}(z)= \sum_{j=1}^{\infty} c_n z^n
 $$ is a rational function, 
 so the Skolem-Mahler-Lech theorem applies to show that
  $$
S:= \{ m: c_m =0\} := \NN^{+} \smallsetminus \sO
 $$
 has the SML property, i.e.  
 its members are eventually periodic modulo some finite modulus $d \ge 1$, i.e
 \begin{equation}\label{admis}
 n \in S \Rightarrow   \,\, n+d \in S  \quad \mbox{for all} \quad n \ge n_0(d).
 \end{equation}
That is, $S$  coincides with the set union of a finite set of complete  arithmetic progressions 
$$
P(a_i; d) := \{ n \ge 1:  n \equiv a (\bmod \, d) \},
$$
 up to the inclusion or exclusion of a  finite set of exceptional values.
We call a modulus $d$ {\em admissible}  if it has the eventual periodicity 
property \eqref{admis}. Here  $S$ is admissible if and only if its  complement $S^c := \NN^{+} \smallsetminus S= \sO$
is admissible. 
  
  We first consider the minimal admissible  $d$ for which such a property holds. Now we have 
 a partition of all residue classes $(\bmod \, d)$  given by $X \cup Y$ with 
 \begin{eqnarray*}
 X & := & \{ i~(\bmod\, d) : P(i; d) \cap S^c \quad \mbox{is finite}\} \\
 Y   &:= & \{ i~(\bmod\, d) : P(i; d) \cap S \quad \mbox{is finite}\}.
 \end{eqnarray*}
 In particular, if a coset $j\, (\bmod \, d)$ contains infinitely many elements of $\sO$
 then this coset must belong to $X$.

\begin{claim} \label{claim1}
The minimal admissible modulus $d$ is odd.
\end{claim}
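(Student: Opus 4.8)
The plan is to use two structural features of backward orbits of $T_k$: they are closed under the doubling map $n \mapsto 2n$, and they are closed under halving up to finitely many exceptions. Together these force an even admissible modulus to be halvable, so the minimal admissible modulus cannot be even.

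First I would establish the following eventual equivalence: there is a threshold $N$ such that for all $n \ge N$,
$$
n \in \sO \quad \Longleftrightarrow \quad 2n \in \sO.
$$
The forward implication is immediate: since $T_k(2n) = n$ for every $n$, if $T_k^{\circ j}(n) = m_i$ then $T_k^{\circ(j+1)}(2n) = m_i$, so $n \in \sO_{k}^{-}(m_i)$ gives $2n \in \sO_{k}^{-}(m_i)$. For the reverse implication, suppose $2n \in \sO_{k}^{-}(m_i)$, so $T_k^{\circ j}(2n) = m_i$ for some $j \ge 0$. If $j \ge 1$, then applying $T_k(2n)=n$ gives $T_k^{\circ(j-1)}(n) = m_i$ and hence $n \in \sO$; the only remaining possibility is $j = 0$, i.e. $2n = m_i$, which holds for at most one $n$ per base point. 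Absorbing these finitely many exceptional values into the threshold $N$ yields the displayed equivalence.

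Next, suppose for contradiction that the minimal admissible modulus $d$ is even, and write $d = 2e$. Admissibility of $d$ means that membership in $\sO$ is eventually periodic modulo $d$, so there is a function $\psi \colon \ZZ/d\ZZ \to \{0,1\}$ with $\psi(n \bmod d) = 1$ exactly when $n \in \sO$, for all large $n$. Substituting the eventual equivalence and letting $n$ run over all large residues translates it into doubling-invariance of $\psi$:
$$
\psi(a) = \psi(2a \bmod d) \qquad \text{for all } a \in \ZZ/d\ZZ .
$$
Applying this to the residue $a + e$ and using $2(a+e) = 2a + 2e \equiv 2a \pmod{2e}$ gives
$$
\psi(a+e) = \psi\big(2(a+e) \bmod d\big) = \psi(2a \bmod d) = \psi(a)
$$
for every $a$. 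Thus $\psi$ is invariant under translation by $e$ and descends to a function on $\ZZ/e\ZZ$, so $\sO$ is eventually periodic modulo $e$ and $e$ is admissible. This contradicts the minimality of $d = 2e$, and therefore the minimal admissible modulus is odd.

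The main obstacle is the reverse (halving) implication in the eventual equivalence: it is the step that genuinely uses the special arithmetic of $T_k$, namely that $2n$ is a canonical $T_k$-preimage of $n$, and it is where the finitely many exceptional base points $m_i$ must be isolated so as not to disturb the eventual periodicity. Once this equivalence is secured, the passage from modulus $2e$ to modulus $e$ is a one-line computation in $\ZZ/2e\ZZ$, exploiting that doubling identifies $a$ with $a+e$; and the reinterpretation of admissibility as the periodicity of $\psi$ is simply the definition unwound.
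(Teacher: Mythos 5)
Your proof is correct and is essentially the paper's own argument: both rest on the two facts that $n \in \sO$ implies $2n \in \sO$ exactly, while $2n \in \sO$ implies $n \in \sO$ up to finitely many exceptional base points, combined with the congruence $2(a + d/2) \equiv 2a \pmod{d}$, which makes doubling identify the residue classes $a$ and $a + d/2$. The paper merely organizes this as a contradiction from a witness pair of classes guaranteed by minimality (a class $i$ eventually inside $\sO$ with $i + d/2$ eventually outside), whereas you derive the uniform invariance $\psi(a) = \psi(a+e)$ and then contradict minimality; the mechanism is the same.
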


\begin{proof}
Suppose to the contrary that $d= 2d'$ were even. By minimality there must exist 
a residue class $i ~(\bmod \, d)$ such that  $i \, (\bmod \, d) \, \in X$ while  $i+d' (\bmod \, d)$ in  $Y$.

We assert that the residue class $ 2i~(\bmod \, d) \in X$. Pick a modulus $m_j$ such that there are
infinitely many
inverse iterates of $m$ in the class $i\, (\bmod \, d)$.
Now for  $n \in X$  with $n \in \sO_{k}^{-} (m_i)$ then $2n \in \sO_k^{-}(m_i)$. 
This gives infinitely many elements of $X$ in  the residue class $2i ~(\bmod \, d)$, whence this entire residue class is in $X$.
We conclude  that all sufficiently large members of $P(2i; d)$ belong to $X$.  

However 
$$
T_k(2i + dn) = T_k(2 (i+d'n)) = i+ d'n,
$$
hence half of the elements $P(2i; d)$, those with $n$ odd,  arise as preimages under $T_k$ of the residue class $P( i+ d'; d)$.
Since all sufficiently large  elements of $X$ are the preimage under $T_k$ of some other element of $X$, we conclude that
infinitely many elements of $X$ fall into the class $i + d' ~(\bmod \, d)$, whence
this class also belongs to $X$. But this contradicts the hypothesis that this class belonged to  $Y$,
and Claim \ref{claim1}  follows.
\end{proof}

\begin{claim} \label{claim2}
The minimal admissible modulus $d$ is not divisible by $3$.
\end{claim}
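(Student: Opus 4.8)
The plan is to mirror the structure of the proof of Claim \ref{claim1}, replacing the doubling map with the tripling map and using the same $X/Y$ partition machinery. I would argue by contradiction: suppose the minimal admissible modulus $d$ is divisible by $3$, say $d = 3d'$. By minimality of $d$, the partition into $X$ and $Y$ cannot be refined to a smaller modulus, so there must exist a residue class $i \,(\bmod\, d)$ lying in $X$ together with some shifted class among $i + d'$ or $i + 2d' \,(\bmod\, d)$ lying in $Y$; otherwise the whole pattern would descend to the modulus $d' = d/3$, contradicting minimality.

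The key engine is the closure of $X$ under the map $r \mapsto 2r$ established inside Claim \ref{claim1}, and I would first establish the companion fact that $X$ is also closed under $r \mapsto 3r$ acting on residue classes $(\bmod\, d)$. For the latter, the idea is that if $n$ is a large element of $\sO$ in a class belonging to $X$ and $n$ is odd, then under the inverse map $T_k^{\circ -1}$ one has $2n \in \sO$, but more usefully, for $n \equiv 2 \,(\bmod\, 3)$ the preimage branch $\frac{2n-1}{3}$ produces a smaller element whose image recovers $n$; dually, taking forward images, $T_k(n) = \frac{3n+k}{2}$ for odd $n$ injects the tripled residue into the orbit. I would use these preimage/image relations to show that a residue class in $X$ forces $3i \,(\bmod\, d)$ (and appropriate shifts) into $X$ as well, exactly paralleling the doubling argument. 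Concretely, from $T_k(2i + dn) = i + d'n$ in the even case one derives preimage relations that cover the odd-$n$ half of the shifted progression $P(i+d'; d)$ or $P(i+2d'; d)$, so that infinitely many elements of $X$ fall into one of the shifted classes, placing that class in $X$ and contradicting its assumed membership in $Y$.

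The hard part will be correctly tracking the divisibility bookkeeping modulo $3$: unlike the clean two-to-one structure of $T_k$ coming from the even/odd split, the interaction between the residue $n \,(\bmod\, 3)$ (which governs which inverse branch is available) and the residue $n \,(\bmod\, d)$ with $3 \mid d$ is more delicate, since whether $\frac{2n-1}{3}$ is an integer depends on $n \,(\bmod\, 3)$ and this condition is entangled with the class modulo $d$. I would need to verify that among the three shifted progressions $P(i + j d'; d)$, $j = 0,1,2$, at least one has infinitely many of its elements arising as $T_k$-images (or inverse images) of elements already known to lie in $X$, so that the argument does not get stuck on the branch where the tripling preimage fails to be an integer. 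The remaining steps — concluding that the existence of a class in $X$ whose shift lies in $Y$ is impossible, hence $3 \nmid d$ — then follow by the same contradiction template as Claim \ref{claim1}.

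Once Claims \ref{claim1} and \ref{claim2} are in place, the final sentence of the theorem, namely that $X$ is closed under $r \mapsto 2r$ and $r \mapsto 3r$ modulo $|k|$, follows by combining the closure properties of $X$ established above with the identification of the minimal admissible modulus $d$ as a divisor of $|k|$; I expect this identification to come from matching $g(z)$ against the explicit rational function $h(z)$ in condition (2), whose denominator $1 - z^{|k|}$ fixes the period at $|k|$.
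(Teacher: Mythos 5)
Your overall template does match the paper's: argue by contradiction with $d=3d'$, use minimality to produce a class $i \ (\bmod\ d)$ in $X$ with at least one of $i+d'$, $i+2d' \ (\bmod\ d)$ in $Y$, and propagate membership in $X$ via orbit closure. But at the crux your plan has a genuine gap. The mechanism that makes the paper's proof work is not ``closure of $X$ under $r\mapsto 3r$'' but the observation that when $d=3d'$ the odd branch $n\mapsto \frac{3n+k}{2}$ \emph{collapses all three classes} $i$, $i+d'$, $i+2d'$ $(\bmod\ d)$ into the single class $\frac{3i+k}{2}\ (\bmod\ d)$, because $3d'\equiv 0\ (\bmod\ d)$. (By Claim 1, $d$ is odd, so one may replace $i$ by $i+d$ to get odd representatives.) Forward iteration of the large odd elements of $P(i;d)$, all of which lie in $\sO$ with at most $\ell$ exceptions, shows this common image class is in $X$. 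Then, because backward orbits are closed under taking preimages (if $T_k(n)\in\sO$ then $n\in\sO$), every sufficiently large odd element of $P(i+d';d)$ and of $P(i+2d';d)$ lies in $\sO$, since its $T_k$-image lands in the common class; hence \emph{both} shifted classes are in $X$, the desired contradiction. Your proposal misses this collapsing step and instead leans on the identity $T_k(2i+dn)=i+d'n$, which is the even-modulus computation from Claim 1 and is inapplicable here precisely because Claim 1 forces $d$ to be odd.

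Two further problems follow. First, you only aim to show that ``at least one'' of the shifted progressions receives infinitely many orbit elements; that cannot close the argument, since minimality only guarantees that at least one of $i+d'$, $i+2d'$ lies in $Y$, and the class you capture might be the other one. You must capture both (as the paper does), or at least the specific class assumed to lie in $Y$. Second, the ``hard part'' you flag --- whether the inverse branch $\frac{2n-k}{3}$ is an integer, i.e.\ the entanglement of the condition mod $3$ with the condition mod $d$ --- is a red herring: the correct argument never invokes that inverse branch at all. Backward propagation of orbit membership needs only the tautology that a $T_k$-preimage of an orbit element is an orbit element, so no integrality bookkeeping arises. Since you explicitly leave that part unresolved, the proof as proposed is incomplete.
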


\begin{proof}
Suppose to the contrary that $d= 3d'$ were divisible by $3$.
Then there must exist  a residue class  $i ~(\bmod\, d) \in X$,
with at least one of $i+ d' \, (\bmod \, d)$ and $i+2d'\, (\bmod \, d)$ in $Y$.
By Claim \ref{claim1} $d$ is odd, so by replacing $i$ by $i+d$ if
necessary we may assume that $i$ is odd. 
The residue class $i \, (\bmod \, d)$ contains all sufficiently large elements of
form $n= i+ 2dn'$  in $X$,  and since these are odd numbers, 
with at most $\ell$ exceptions 
$$
T_k(n) = \frac{3i+k}{2} + 3dn'  \in X.
$$
This exhibits infinitely many numbers   $ \frac{3i+k}{2} \,(\bmod \, d)$ in $X$,
so it follows that  $\frac{3i+k}{2} \, (\bmod \, d)$ belongs to $X$. 
However one has
$$
T_k(i + 2d' + 2dn' ) = \frac{3i+k}{2} +  d(3n'+1)  \subset P( (3i+k)/2;  d)
$$
and also
$$
T_k( i + d'  +(d+ 2dn')) = \frac{3i+k}{2}  + d(3n' +1) \subset P( (3i+k)/2;  d).
$$
Thus each residue class $i + d' (\bmod \, d)$ and $i+2d' (\bmod \, d)$ contains
infinitely many elements of $X$, so both these residue classes
must belong to $X$. This contradicts the fact that one of these classes is in $Y$
and Claim \ref{claim2} follows.
\end{proof}

\begin{claim} \label{claim3}
The modulus  $d=|k|$ is an admissible modulus.
\end{claim}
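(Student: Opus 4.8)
The plan is to prove the equivalent statement that the \emph{minimal} admissible modulus $d$ — which by Claims~\ref{claim1} and~\ref{claim2} satisfies $\gcd(d,6)=1$ — divides $|k|$. This suffices, because any multiple of an admissible modulus is admissible: if $n\in S \Rightarrow n+d\in S$ for all $n\ge n_0$, then iterating the implication gives $n\in S\Rightarrow n+|k|\in S$ for all $n\ge n_0$ whenever $d\mid |k|$, so $|k|$ is admissible. Thus the whole claim reduces to showing $d\mid |k|$.

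To obtain $d\mid|k|$, I would first exploit that eventual periodicity mod $d$ partitions the residues into the full classes $X$ and the empty classes $Y$, and that \emph{every} class lies in exactly one of these two sets; in particular any residue class containing infinitely many elements of $\sO$ must be full. Next I record that $\sO$ is forward-invariant up to a finite set: since $\sO$ is a union of backward orbits, for all but finitely many $n\in\sO$ (namely excluding the base points $m_i$ and the finitely many $n$ for which $T_k(n)\le 0$ when $k<0$) one has $T_k(n)\in\sO$. Because $d$ is odd, each class $i\ (\bmod\, d)$ contains infinitely many even and infinitely many odd integers. Pushing the even elements of a full class $i$ forward by $T_k$ (the map $n\mapsto n/2$) produces infinitely many elements of $\sO$ in the class $2^{-1}i\ (\bmod\, d)$, and pushing the odd elements forward (the map $n\mapsto (3n+k)/2$) produces infinitely many elements of $\sO$ in the class $(3i+k)2^{-1}\ (\bmod\, d)$; by the dichotomy both target classes are full. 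Hence $X$ is closed under $\psi\colon i\mapsto 2^{-1}i$ and $\phi\colon i\mapsto (3i+k)2^{-1}$, which are bijections of $\ZZ/d\ZZ$ since $2,3$ are units (here is where $\gcd(d,6)=1$ enters). As $X$ is finite it is invariant under the group $G$ generated by $\phi$ and $\psi$, which contains $\tau_1\colon i\mapsto 2i$ and $\tau_2\colon i\mapsto 3i+k$.

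The decisive computation is that the commutator of $\tau_1$ and $\tau_2$ is a pure translation: one checks directly that $\tau_1\tau_2\tau_1^{-1}\tau_2^{-1}(i)=i+k\ (\bmod\, d)$ (the linear parts $2$ and $3$ cancel, leaving the shift determined by the mismatch between the fixed points $0$ of $\tau_1$ and $-k\cdot 2^{-1}$ of $\tau_2$). Therefore $X$ is invariant under $i\mapsto i+k$, hence under $i\mapsto i+e$ where $e:=\gcd(k,d)$, since $k$ and $e$ generate the same subgroup of $\ZZ/d\ZZ$. Invariance of $X$ under $+e$ with $e\mid d$ shows that membership of large $n$ in $\sO$ depends only on $n\ (\bmod\, e)$, so $S$ is eventually periodic mod $e$ and $e$ is admissible. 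Minimality of $d$ then forces $e=d$, i.e. $d=\gcd(k,d)\mid k$, giving $d\mid |k|$ and completing the reduction.

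The main obstacle is the closure step of the second paragraph: one must verify carefully that the forward images of a full class genuinely deposit infinitely many points of $\sO$ into each target class (so that the full/empty dichotomy can be invoked to upgrade ``infinitely many'' to ``cofinitely many''), and that only finitely many points are lost to the base points $m_i$ and to sign issues when $k<0$. By contrast, the commutator identity producing translation by $k$ is short and routine once $\tau_1,\tau_2$ are in hand, and the final descent to $d\mid|k|$ is immediate from the minimality of $d$.
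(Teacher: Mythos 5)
Your proof is correct and follows essentially the same route as the paper: closure of the set $X$ of full residue classes under the affine maps induced by forward iteration of $T_k$, followed by the commutator computation $\tau_1\tau_2\tau_1^{-1}\tau_2^{-1}(i)=i+k$ to extract translation by $k$, and a minimality argument to pin down the modulus. The only cosmetic differences are that the paper redefines $d$ as the minimal admissible modulus divisible by $|k|$ and shows it equals $|k|$, obtaining closure under the inverse maps via a separate backward-iteration argument, whereas you show the minimal admissible modulus divides $|k|$ (then lift to $|k|$) and get inverse-closure for free from finiteness of $X$ and bijectivity of the maps; both variants are sound.
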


\begin{proof}
Since we now know the minimum admissible modulus $d$ has $(d, 6)=1$, by splitting into  smaller residue
classes as necessary we may obtain an admissible modulus $d'$ such that  $|k|$ divides $d'$,
and $(d', 6)=1$. 
We now redefine  $d \ge 1$ to  be the minimal
admissible modulus having the property that $|k|$ divides $d$.
The claim asserts that   $d= |k|$.

Since $(d, 6)=1$, both $2$ and $3$ are invertible $(\bmod \, d)$. 
By applying forward iteration, since $n \in \sO_{k}^{-}(m_j)$ implies $T_k(n) \in \sO_{k}^{-} (m_j)$
with at most one exception, and since each residue class $(\bmod \, d)$ contains
infinitely many even integers and infinitely many odd integers, we conclude that 
$$
i ~(\bmod \, d) \in X  \Rightarrow \, \frac{i}{2} ~\, (\bmod \, d) \in X.
$$
Since each congruence class $(\bmod \, d)$ contains infinitely many odd numbers,
we also have
$$
i ~(\bmod \, d) \in X  \Rightarrow \, \frac{3i +k}{2} ~\, (\bmod \, d) \in X.
$$
By applying a single step of backward iteration of $T_k$, and observing each residue class $(\bmod \, d)$ has
infinitely many integers in each residue class $(\bmod \, 6)$ we obtain
$$
i ~(\bmod \, d) \in X \Rightarrow \, 2i ~\, (\bmod \, d) \in X.
$$
$$
i ~(\bmod \, d) \in X  \Rightarrow \, \frac{2i-k}{3}  ~\, (\bmod \, d) \in X.
$$
These results show that the set of residue classes $(\bmod \, d)$
in $X$ is closed under the action of the transfomations  $S_1(r) = 2r$
and  $S_2(r) = \frac{3r+k}{2}$ as well as under their inverses
$(S_1)^{-1} (r) = \frac{r}{2}$ and $(S_2)^{-1}(r) = \frac{2r-k}{3}$.  Now set
$S_3 (r) := S_1 \circ S_2 (r) = 3r +k$, with inverse map $(S_3)^{-1}(r) = \frac{r-k}{3}.$
One calculates the commutator map 
$$
S_1 S_3 (S_1)^{-1} (S_3)^{-1}(r) = r+ k.
$$
and
$$
S_3 S_1 (S_3)^{-1} (S_1)^{-1}(r) = r-k,
$$
We deduce that for all sufficiently large members of $X$, 
$$
n \in X  \Rightarrow n+|k| \in X.
$$
This fact shows that we may choose the modulus $d= |k|$, proving Claim 3.
\end{proof}

Claim 3 now  establishes that the set of residue classes in both $X$ $(\bmod \, |k|)$
is  invariant under the action of the maps $x \mapsto 2x$ and $x \mapsto 3x+k  \equiv  3x ~(\bmod \, |k|)$.
It follows that the complementary set $Y$ of residues is also invariant under these maps,
which are invertible since $(k, 6)=1$. This shows that property (2) holds, 
and also gives the extra invariance condition on the residue classes on $X$. 
 \end{proof}

%
%
%
%

\section{Natural Boundaries for Backward Orbits of $3x\pm 1$ functions}\label{sec4}

 We use Theorem \ref{th10a} to show  the existence of  natural boundaries for
the generating functions of most backward orbits for the $3x \pm 1$ functions,
as stated  in Theorems \ref{th11} and \ref{th12}.
 
 \begin{proof}[Proof of Theorem \ref{th11}]
 
 (1) By the P\'{o}lya-Carlson Theorem, for $\sO_{-}(m)$ 
 the associated generating function $f_{1, m}(z)$ will have $\{|z|=1\}$
 as a natural boundary if and only if $f_{1, m}(z)$ is not a rational function.
 By  Theorem ~\ref{th10a}, the orbit generating function  $f_{1, m}(z)$ will
 be a rational function if and only if the backward orbit $\sO_{1}^{-}(m)$ is eventually periodic modulo $1$,
 i.e. it contains all sufficiently large integers $n$.  
 To show a given backward orbit $\sO_{1}^{-}(m)$ for $m \ge 1$ has generating function $f_{1, m}(z)$ that
 is not a rational function, it suffices to show  there is another backward orbit  $\sO_{1}(m')$  disjoint from it
 for some $m' \ge 1$. 
 
 If the $3x+1$ Conjecture is false, then there exists a backward orbit $\sO_{1}^{-}(m')$ of positive
 integers disjoint from  $\sO_{1}^{-}(1)$, and  these orbits are infinite. It follows that 
 $f_{1, m}(z)$ is not a rational function
 of $z$ for all $m \ge 1$, since the backward orbit $\sO^{-1}(m)$ must be disjoint from at least one of
 the orbits $\sO_{1}^{-}(1)$ or $\sO^{-}(m)$. 
 
 If the $3x+1$ Conjecture is true, then
 for $m = 1, 2, 4, 8$  we have
 $$
 f_{1, m}(z) = (\sum_{n=1}^{\infty} z^n) - p_m(z)  = \frac{z}{1-z}- p_{m}(z),
 $$
in which  $p_m(z) = 0, z, z + z^2, z+ z^2+z^4$, and all these $f_{1,m}(z)$ are
rational functions. All other $m \in \NN^{-}$ then belong 
to one of  $m \in \sO_{1}^{-}(5)$ or $m \in \sO_{1}^{-}(16)$. These two backward orbits are  infinite
 and are disjoint, which certifies that the elements $\sO_{1}^{-}(m)$ for such $m$ 
 are eventually periodic modulo $1$, whence the contrapositive of Theorem \ref{th10a}
 implies $f_{1, m}(z)$ is not a rational function.  
    \end{proof}

\begin{proof}[Proof of Theorem \ref{th12}]
 For the $3x-1$ function,  it is known that $\sO_{-1}^{-}(1)$, $\sO_{-1}^{-}(5)$ and
 $\sO_{-1}^{-} (17)$ are infinite disjoint sets.  It follows that for any $m \ge 1$ 
 the  elements in the backward orbit  $\sO_{-1}^{-}(m)$ cannot  be eventually periodic
 modulo $1$. The contrapositive of Theorem \ref{th10a} for $k=-1$ implies that
 $f_{-1, m}(z)$ is not a rational function. By the P\'{o}lya-Carlson theorem it then must
 have the unit circle $\{ |z|=1\}$ as a natural boundary to analytic continuation. 
 \end{proof}
%
%
%
%

\section{Natural Boundaries for Backward Orbits of $3x+k$ functions}\label{sec5}

We first discuss   complications in the iteration of general $3x+k$ maps compared
with the $3x\pm 1$ maps, and then prove Theorem \ref{th14}.

The behavior of general $3x+k$ maps under iteration exhibit three 
features not occurring for   $3x\pm 1$ maps. 
The first of these additional features is that the domain $\ZZ$ splits into 
various  bi-invariant sets for $T_k$, as follows. 
\begin{lemma} \label{lem51}
For $k \equiv \pm 1 ~(\bmod \, 6)$, partition the congruence classes $~(\bmod \, |k|)$ into
bi-invariant sets under the action of the group generated by multiplication by $2$ and $3$
on $(\ZZ/k\ZZ)^{*}$.  Then for $1 \le a \le |k|$ each of  the sets 
$$X_{a, k} := \bigcup_{i, j \ge 1}  P( 2^i 3^j a;  |k|) $$
is forward invariant and backward invariant 
under $T_k$.
\end{lemma}

\begin{proof}
Using $(|k|, 6)=1$ 
the two  transformations acting
on the finite group $\ZZ/ k \ZZ$  given by $S_1(r)= 2r$ and $S_2(r) = \frac{3r+k}{2}$
are invertible and so form a group.  The proof of Claim 3 of Theorem \ref{th11}
shows that this group 
is generated by $S_1(r)$ and $S_3(r) = 3r$. The 
projections on $\ZZ/k \ZZ$ of the sets $X_{a, k}$ are minimal
sets closed under the action of these generators. 
The forward and backward invariance of
these  sets immediately follows from this fact.
\end{proof}

The second additional feature  of general $3x+k$ maps is that for  positive divisors $d$ of $k$, each of the sets
$$
S_{d,k} := \bigcup_{a: (a, |k|)= d} P(a; |k|)
$$
is bi-invariant set for $T_k$, and is a disjoint unions of classes $X_{a, k}$.
 Furthermore the map $M_{d}$ of
multiplication by $d$ has
 $$
 M_{d} \Big(  S_{1, k/d} \Big) =  S_{d, k}. 
 $$ 
It defines a conjugacy map 
between $T_{k/d}$ and $T_k$ restricted to  these invariant sets,  i.e.
$$
M_{k/d} T_{k/d} (n) = T_k M_{k/d} (n) ~~~~\mbox{for all}~~ n \in S_{1, k/d}.
$$
This conjugacy also applies at the level of the smaller bi-invariant sets $X_{a, k}$, for
if $(a, k) =d$ then 
$$
M_{d} \Big(  X_{a/d, k/d}  \Big) =  X_{a,k} , 
$$

The third additional feature is that  $3x+k$ maps may   contain backward
orbits that have infinite intersection with both $\NN^{+}$
and with the negative integers $\NN^{-}$.
As an example of a backward  orbit unbounded for positive and negative
integers, take any $k>0$ with $k \equiv 5 ~(\bmod \, 6)$ and
the backward orbit  $\sO_{k}^{-}(1)$.
 We have $\frac{2-k}{3} \in \sO_k^{-}(1)$,
hence 
$$\{ (2-k)2^j, 2^j: j \ge 1\} \bigcup \{ 2^j : j \ge 1\} \subset \sO_k^{-}(1).$$
There are only finitely many such exceptional orbits,
and each of them contains an odd integer in the ``critical interval" $[-k, -1].$

\begin{proof}[Proof of Theorem \ref{th14}.]
We treat separately the action of $T_k$ on each bi-invariant subset $X_{a, k}$ of $\ZZ$
given by Lemma \ref{lem51}. There are finitely many such subsets,
so it suffices to prove  for each $X_{a, k}$ that for all but finitely many $m \in X_{a, k}$ with $m \ge 1$
 the generating function
$f_{k,m}(z)$ has $\{|z|=1\}$ as a natural boundary. By the  P\'{o}lya-Carlson theorem
this is equivalent to the assertion that $f_{k, m}(z)$ is a rational function for only finitely many $m \in X_{a, k}$ with $m \ge 1$. 
(Here the condition $m \ge 1$ must be imposed in the hypothesis of Theorem \ref{th14}
because there always exist infinitely many negative $m$ with $\sO_{k}^{-}(m) \subset \NN^{-}$
whence $f_{k,m}(z) =0$ is a rational function.)

 Now suppose that $m \in X_{a, k} \cap \NN^{+}$ is such that 
 $f_{k, m}(z)$ is a rational function. 
Then by   criterion (2)  of Theorem \ref{th11}, 
the backward orbit  $\sO_k^{-}(m)$ (which is  infinite)
 must contain all sufficiently
large positive integers in $X_{a, k}$.
However a backward
orbit $\sO_k^{-}(m)$ cannot have this  property  
whenever there exists another $m' \in X_{a, k}\cap \NN^{+}$ such that $\sO_k^{-}(m')$ is
disjoint from $\sO_{k}^{-}(m)$.
Since all backward orbits on positive
integers are infinite,  the finitely many elements $\Big(X_{a, k} \cap\NN^{+} \Big)\smallsetminus \sO_k^{-}(m)$
must by trichotomy have backward orbits intersecting $\sO_k^{-}(m)$, and therefore containing it.
By replacing $m$ with such an element, we may enlarge the backward orbit,
and thus in a finite number of steps arrive at a single positive element $m'$ whose backward orbit $\sO_k{-}(m)$ contains all of
$ X_{a, k}\cap \NN^{+}$. 
We now study this backward orbit;  note that it may contain some negative integers.
There are two cases to consider.\medskip

{\bf Case 1.} {\em The backward orbit $\sO^{-}(m')$ is a tree.}\medskip

This tree must necessarily branch at a lowest point where both subtrees at the
branch contain infinitely many positive integers,  
otherwise its density on the positive integers in $X_{a,k}$ would be
zero, contradicting that it covers all positive integers in $X_{a,k}$.
That is,  there can be at most a finite number of branchings where only
one of the two branch subtrees contain positive integers.
Go to the lowest such branching where both subtrees have positive
integers, hence infinitely many such integers. Let $m_1, m_2$ be
the lowest nodes in these two subtrees. Now $\sO_k^{-}(m_1)$ and $\sO_k^{-}(m_2)$
cover all but finitely many elements of $X_{a, k} \cap \NN^{+}$ and each certifies
that all nodes in the other have backward orbit generating functions having $\{|z|=1\}$
as a natural boundary.\medskip

{\bf Case 2.} {\em The backward orbit $\sO^{-}(m')$ contains a periodic orbit.}\medskip

In this case $m'$ itself must be in the periodic  orbit, and $\sO^{-}(m')$ consists of 
this periodic orbit plus a finite number of trees that enter it under forward iteration.  Let
$m_1, \ldots , m_k$ denote the lowest node in each such tree that does not belong to  the periodic
orbit.  If there are more than two such trees containing a positive element, then together
they certify that all positive elements $m''$ in all these trees have generating functions $f_{k, m''}(z)$ having $\{|z|=1\}$
as a natural boundary. In this case only the elements of the periodic orbit itself have
$f_{k, m''}(z)$ being a rational function. If, however, there is exactly  one tree entering
the periodic orbit, then we must go backward to the first branching node in this tree 
such that  both subtrees contain positive elements, and repeat the argument  in Case 1.
This latter case will occur for the $3x+1$ function if the $3x+1$ Conjecture is true,
taking $m'=1$ in that case.
\end{proof}

%
%
%
%

\section{Existence of Finite backward Orbits Covering Almost All Positive Integers}\label{sec6}

Standard conjectures for the $3x+k$ problem on the integers when $(k, 6)=1$ (see \cite{Lag90})
assert  that the following
hold.
\begin{enumerate}
\item[(i)]
({\em Finite Cycles Conjecture}) There are only finitely many periodic orbits of $T_k$ on
the integers $\ZZ$.
\item[(ii)]
({\em Divergent Trajectories Conjecture}) There are no divergent trajectories of $T_k$, i.e.
for no integer $m$ is its forward orbit $\sO_k^{+}(m)$ of infinite cardinality.
\end{enumerate}

The first conjecture follows from the Finite Primitive Cycles Conjecture made in \cite{Lag90},
taken over all the divisors of $k$.
The second conjecture is a generalization of the Divergent Trajectories Conjecture given
for the $3x+1$ problem in \cite[Sect. 2.7]{Lag85}; the heuristic argument justifying it applies just as
well to the $3x+k$ problem with $\gcd(k, 6)=1$.

We will show these two conjectures together  imply the existence of collections of orbits
satisfying the hypotheses (1)--(2) of 
 Theorem \ref{th10a}.  We make some preliminary definitions.
\begin{defi} \label{de61} 
 Two analytic functions $f_1(z), f_2(z)$ will be called {\em rationally equivalent}
if their difference $f_1(z) - f_2(z)$ is a rational function. Otherwise they are 
{\em rationally inequivalent}.
\end{defi}

\begin{defi} \label{de61} 
 Two collections of analytic functions $(f_1(z), f_2(z), \ldots , f_n(z))$ 
 and $(g_1(z), \ldots, g_m(z))$ are called {\em rationally equivalent} if there is
 a one-to-one correspondence of rationally equivalent pairs $(f_i(z), g_{\sigma(i)}(z))$
 where if the sequences are of unequal length one is padded with zeros to be
 the same length, and $\sigma$ is a permutation of indices  of that length. Otherwise they are 
{\em rationally inequivalent}.
\end{defi}

\begin{theorem} \label{th-over}
Suppose that the  Finite Cycles Conjecture and the Divergent Trajectories Conjecture
both hold for $T_k$. 
Then for each set $X$ of residue classes $(\bmod\, |k|)$ that is closed under the 
action of  the maps $r \mapsto 2r$ and $r \mapsto 3 r$
acting on residue classes $(\bmod \, |k|)$ 
there are infinitely many different finite collections  of disjoint backward orbits of form 
$\{ \sO_{k}^{-} (m_i); 1 \le i \le \ell\}$ (with all $m_i >0$) that are pairwise rationally inequivalent, such that:
\begin{enumerate}
\item
all orbits $\sO_{k}^{-} (m_i)$ take values only in residue classes in $X$;
\item
all sufficiently large positive integers in congruence classes in $X$
belong to their set union: $\sS :=\bigcup_{i=1}^{\ell} \sO_{k}^{-}(m_i).$
\end{enumerate}
In all these cases  the associated  generating function 
$g_{\sS}(z) := \sum_{n \in \sS \cap \NN^{+}} z^n$
is a  rational function.
\end{theorem}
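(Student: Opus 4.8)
The plan is to use the two conjectures to collapse the forward dynamics of $T_k$ on the bi-invariant integer set $\hat X := \{ n \in \ZZ : n ~(\bmod\,|k|) \in X\}$ into finitely many ``basins of attraction,'' and then to manufacture the covering families by cutting the associated backward trees at arbitrarily deep frontiers. A preliminary observation drives everything: for $k>0$ one has $T_k(n) > 0$ whenever $n > 0$, since $3n+k>0$; and for $k<0$ the same holds once $n > |k|/3$. So in every case all but finitely many positive integers remain positive under forward iteration, and $X \cap \NN^{+}$ is, up to a finite set, forward invariant. (This is the cleanest way to handle both signs of $k$; alternatively one invokes the conjugacy $T_{k}\circ J = J\circ T_{-k}$ from the introduction.)

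First I would set up the basin decomposition. The Divergent Trajectories Conjecture forces every forward orbit to be finite, hence eventually periodic, and the Finite Cycles Conjecture bounds the number of periodic orbits; so every $n \in \hat X$ flows forward into one of finitely many cycles $C_1,\dots,C_s \subset \hat X$. Since large positive integers remain positive, each positive integer of $\hat X$ enters a cycle meeting $\NN^{+}$, and $X \cap \NN^{+}$ is the disjoint union of the positive basins $B_j^{+}$ of those cycles. For a cycle node $c\in C_j$ the backward orbit $\sO_k^{-}(c)$ is exactly the basin of $C_j$; deleting the cycle itself decomposes $B_j^{+}$, up to the finitely many cycle elements, into the disjoint backward orbits rooted at the off-cycle preimages of the cycle nodes (these roots are positive, being forward images of positive integers, and lie in $\hat X$). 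Taking these roots over all $j$ yields an \emph{anchor family} $\sS_0$: finitely many disjoint backward orbits with positive bases whose positive parts cover $X \cap \NN^{+}$ up to a finite set. By criterion (2) of Theorem \ref{th10a} its generating function is rational, which already disposes of the final assertion for every family I will build, each of which covers $X\cap\NN^{+}$ cofinitely.

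The substance of the theorem is producing \emph{infinitely many pairwise rationally inequivalent} families, and here the key device is an invariant. If two families of disjoint backward orbits are rationally equivalent, then after padding with zero functions the defining bijection pairs each orbit with a partner (or with $0$); since the difference of a non-rational function and a rational function (the zero function included) is never rational, a non-rational orbit can only be paired with another non-rational orbit. Hence rational equivalence preserves the number of orbits whose generating function is not rational, equivalently (by the P\'olya--Carlson Theorem \ref{th32}) the number that have $\{|z|=1\}$ as a natural boundary. It therefore suffices to exhibit cofinite covering families whose number of non-rational orbits is unbounded.

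To do this I would fix an infinite basin $B_j^{+}$ and work with its positive backward tree, which is finitely branching. Any infinite backward path must, for large values, repeatedly take the doubling step $c\mapsto 2c$ (the odd preimage $\frac{2c-k}{3}$ strictly decreases the value once $c>|k|$, and positive integers cannot decrease forever), so each infinite path is a set of density zero; consequently a tree with only boundedly many disjoint infinite subtrees would have density zero, contradicting that $B_j^{+}$ has positive density. Thus the tree admits frontiers (finite antichains below which all sufficiently deep nodes lie) of arbitrarily large size $|F|$, and each frontier $F$ gives a cofinite covering family $\{\sO_k^{-}(a): a\in F\}$ of $B_j^{+}$, which I combine with the fixed anchor orbits for the other basins. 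Crucially, at most $|k|$ of the orbits $\sO_k^{-}(a)$ with $a\in F$ can be rational: a rational one covers cofinitely a nonempty union of residue classes $(\bmod\,|k|)$, and disjointness of the orbits forces these residue sets to be pairwise disjoint, bounding their number by $|X|\le|k|$. So such a family has at least $|F|-|k|$ non-rational orbits; letting $|F|\to\infty$ and invoking the invariant yields infinitely many pairwise rationally inequivalent families. The anchor construction is routine once the conjectures are in hand; the hard part will be this last step, namely the density argument guaranteeing arbitrarily large frontiers together with the uniform bound $|k|$ on how many frontier orbits can be rational, since these are exactly what convert ``many orbits'' into ``many \emph{non-rational} orbits.''
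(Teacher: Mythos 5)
Your architecture is the same as the paper's: the two conjectures yield finitely many cycles whose basins account (cofinitely) for the positive integers in the classes of $X$; an anchor family of disjoint backward orbits covers $X\cap\NN^{+}$ cofinitely, hence is rational by Theorem \ref{th10a}; and infinitely many collections are produced by splitting backward trees. Your mechanism for pairwise rational \emph{in}equivalence is actually cleaner than the paper's (which certifies inequivalence split-by-split with little justification): the invariant ``number of non-rational members,'' preserved under rational equivalence, combined with the bound that any disjoint family contains at most $|k|$ rational orbits because rational orbits cofinitely occupy disjoint nonempty unions of residue classes $(\bmod\,|k|)$, handles all pairs at once. However, two steps have genuine gaps. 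First, the preliminary claim that ``all but finitely many positive integers remain positive under forward iteration'' is false for $k<0$: one-step positivity for $n>|k|/3$ does not propagate along orbits. For $k=-5$ one has $T_{-5}(1)=-1$, while $\sO_{-5}^{-}(1)$ is infinite (it contains $2,3,4,6,7,8,\dots$), so infinitely many positive integers eventually go negative, positive integers can lie in basins of purely negative cycles, and your anchor roots (off-cycle preimages of cycle nodes) need not be positive. This is repairable --- for $k<0$ the negative integers are absorbing and every backward orbit of a positive integer is entirely positive, so one roots the anchor family at positive cycle elements together with the finitely many positive odd $n<|k|/3$ with $T_k(n)<0$ --- but as written your basin decomposition is justified only for $k>0$.

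Second, and more seriously since you flag it yourself as the crux: ``each infinite path is a set of density zero'' does not follow from the fact that the path takes infinitely many doubling steps. A path may take odd steps a fraction $\approx \log 2/\log 3$ of the time, in which case its values grow arbitrarily slowly or hover, and no bound on its counting function follows from the reason you give; ruling out positive upper density for a backward path needs genuine arithmetic input. (You would also need to choose a basin of positive \emph{upper density}, not merely an infinite one, to run the contradiction.) Fortunately frontier existence can be had with no path-density statement at all: a tree node $c$ has its odd preimage $(2c-k)/3$ an integer exactly when $c\equiv 2k \pmod 3$, and that preimage is then automatically odd and, for $c$ large, positive; so every sufficiently large tree node in this class mod $3$ is a branch node. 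Since the basins cofinitely cover an arithmetic progression meeting this class (CRT, using $\gcd(3,|k|)=1$), some basin tree has infinitely many branch nodes; and since every node roots an infinite subtree (its doubling sequence), repeatedly replacing a frontier element by the two children of a branch node in its subtree yields frontiers of every size while discarding only finitely many nodes each time. To be fair, the paper is equally loose at exactly this point (``otherwise there will not be enough elements''), but it does not rest on a false intermediate claim; with these two repairs your argument is correct and, on the inequivalence step, an improvement.
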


\begin{proof}
The truth of the Divergent Trajectories Conjecture implies that each integer enters a periodic
orbit under forward iteration. The truth of the Finite Cycles Conjecture predicts there are finitely
many cycles, call their  generators $\{ m_i : 1 \le i \le \ell\}$, taking $m_i$ the element of smallest
absolute value in each cycle, making if positive if there is  a tie. 

We need only consider those backward orbits $\sO_{k}^{-}(m_i)$ that contain infinitely many positive
integers, which is the same as those that contain at least one positive integer.
The collection of these finite sets of backward orbits whose members $m_i$ belong to $X_{a,k}$
in Lemma \ref{lem51} will cover $X_{a, k} \cap \NN^{+}.$ They are disjoint backward orbits,
and 
$$
\sum_{m_i \in X_{a, k}} f_{k, m_i}(z)  = \sum_{n \in X_{a, k} \cap \NN^{+}} x^n,
$$
which is a rational function. Thus we obtain a finite sum of generating functions of elements
in each $X_{a, k}$ that is rational function. 

To get infinitely many distinct such identities we  trace back their trees of inverse iterates. 
 We  note  that at each step one has the set partition identity 
$$\sO^{-}(m) = \{ m \} \cup \left(\bigcup_{ \{m'\colon T_k(m') = m\}} \sO_k^{-}(m')\right).$$
This identity allows us to replace the generating function of the left side element  by the sum of generating functions of
the right side. We may then drop the generating function of the one element set,
and we get another  identity.  
Whenever a branching of the backward iterate trees  occurs we split the partition of $X_{a, k}$ into one larger set.
Some of these inverse iterate trees must infinitely branch, otherwise there will not be enough elements in
the inverse image to cover all elements of $ X_{a, k} \cap \NN^{+}$, so 
we obtain an infinite set of finite collections of backward 
orbit sets on $X= X_{a, k}$ for which property (2) holds.  

 In the process above, each  time a node is split we get a new collection of orbits whose associated collection of
  generating functions  are  rationally inequivalent to
each such set  that  was constructed earlier. Here we use the criterion of Theorem \ref{th11} to certify the two new functions are rationally inequivalent
to the function they replace and to each other, and to all other generating functions in the current partition.
\end{proof}

%
%
%
%

\section{Concluding Remarks}\label{sec7}

We describe some related problems 
for $3x+1$-type iterations which involve generating functions
with integer coefficients which are (or sometimes may be) rational functions.

Berg and Meinardus \cite{BM94} (see also \cite{BM95})
introduced a set of  generating functions for encoding information about
$3x+1$ iterates, given by power series with integer coefficients.
They introduced for each fixed $m \ge 1$ and for $k=1$ the function
\begin{equation}\label{eq71}
g_{k}^{(m)}(z) := \sum_{n=1}^{\infty} T_k^{\circ m} (n) z^n,
\end{equation}
which encodes the teration exactly $m$ times, 
where the input value varies.  One may call
these functions  {\em $m$-th iterate generating functions.}  These functions converge on the open
unit disk $\{ |z| < 1\}$.
Berg and Meinardus \cite[Theorem 2]{BM94} showed that for each $m \ge 1$ these  functions 
are rational functions of $z$,
and determined properties of these rational functions: all of their poles fall  on the circle $\{|z|=1\}$,
 comprise  a subset of the $2^m$-th roots of unity, and are at most double poles.
Chamberland \cite{Cha13} extended their results to   general $k \equiv \pm 1\, (\bmod \, 6)$,
and to  more general maps ($qx+k$ maps), and studied the structure of these rational functions
in more detail.

Berg and Meinardus \cite{BM94}
also  introduced for fixed $n \ge 1$ and $k=1$ the {\em forward orbit generating functions}
\begin{equation} \label{eq72}
h_{k, n}(w) := \sum_{m=0}^{\infty} T_k^{\circ m} (n) w^m.
\end{equation}
which encode the complete sequence of forward iterates of a fixed integer $n$.
These power series have integer coefficients, but their radii of convergence are
not known in general. 
For given $T_k$ and starting value $n \ge 1$, if the forward orbit of $n $ is eventually periodic 
then the power series for  $h_{k, n}(w)$   will
converge on $\{ |w|< 1\}$ and $h_{k, m}(w)$ will be  a rational 
function of $w$. In particular if the $3x+1$ Conjecture is true, then all $h_{1, n}(w)$ will
be rational functions. 
In the remaining case that   the forward orbit $\sO_{k}^{+}(n)$ is  a divergent trajectory,  Berg and Meinardus
 only assert that the radius of the disk on which the series converges
must be at least $\frac{2}{3}$. One might expect that $h_{k, n}(w)$ will  not be a rational function of $w$  in this case,
but justifying this expectation is   an open problem. 

Thirdly  Berg and Meinardus \cite{BM94} introduced for $k=1$  the bivariate generating functions
$$
F_k(z, w) :=\sum_{m=0}^{\infty} \sum_{n=0}^{\infty} T_k^{\circ m} (n) z^n w^m.
$$
They gave a system of functional equations which this generating function satisfies,
see \cite[Theorem 4]{BM94}. It is not known whether this function is ever a bivariate rational function.

%
%
%
%



\end{document}